\newtheorem{theorem}{Theorem}
\newtheorem{lemma}[theorem]{Lemma}
\newtheorem{proposition}[theorem]{Proposition}
\newtheorem{conjecture}[theorem]{Conjecture}
\theoremstyle{definition}
\newtheorem{remark}{Remark}
\newcommand{\N}{\mathbb{N}}
\newcommand{\IE}{\mathbb{E}}
\newcommand{\IN}{\mathbb{N}}
\newcommand{\IP}{\mathbb{P}}
\newcommand{\IR}{\mathbb{R}}
\newcommand{\vol}{\mathrm{vol}}
\newcommand{\dd}{\mathrm{d}}
\newcommand{\bp}{\mathbb{B}_p^n}
\newcommand{\binf}{\mathbb{B}_{\infty}^n}
\newcommand{\bpb}{\mathbb{B}_{p,\beta}^n}
\newcommand{\btb}{\mathbb{B}_{2,\beta}^n}
\newcommand{\btt}{\mathbb{B}_{2,2}^n}
\newcommand{\binfb}{\mathbb{B}_{\infty,\beta}^n}
\newcommand{\Dp}{\mathbb{D}_p^n}
\newcommand{\Dq}{\mathbb{D}_q^n}
\newcommand{\Dinf}{\mathbb{D}_{\infty}^n}
\newcommand{\Dpb}{\mathbb{D}_{p,\beta}^n}
\newcommand{\Dqb}{\mathbb{D}_{q,\beta}^n}
\newcommand{\tc}{t}
\title{Critical Intersections of unit balls}
\author{Mathias Sonnleitner\footnote{
	 Faculty of Computer Science and Mathematics, University of Passau, 94032 Passau, Germany\\
	 \noindent {\it E-mail:} \texttt{mathias.sonnleitner@uni-passau.de}}
	 and Christoph Thäle\footnote{
	\noindent \textsc{Christoph Th\"ale:} Faculty of Mathematics,
			University of Bochum, 44780 Bochum, Germany\\
	\noindent {\it E-mail:} \texttt{christoph.thaele@rub.de}}}
\begin{document}

\title{\bfseries A note on critical intersections\\ of classical and Schatten $p$-balls}

\author{Mathias Sonnleitner\footnotemark[1]\;\; and Christoph Th\"ale\footnotemark[2]}

\date{}
\renewcommand{\thefootnote}{\fnsymbol{footnote}}
\footnotetext[1]{Faculty of Computer Science and Mathematics, University of Passau, Germany. Email: mathias.sonnleitner@uni-passau.de}

\footnotetext[2]{
	Faculty of Mathematics, Ruhr University Bochum, Germany. Email: christoph.thaele@rub.de}

\maketitle

\begin{abstract}
	\noindent  The purpose of this note is to study the asymptotic volume of intersections of unit balls associated with two norms in $\mathbb{R}^n$ as their dimension $n$ tends to infinity. A general framework is provided and then specialized to the following cases. For classical $\ell_p^n$-balls the focus lies on the case $p=\infty$, which has previously not been studied in the literature. As far as Schatten $p$-balls are considered, we concentrate on the cases $p=2$ and $p=\infty$. In both situations we uncover an unconventional limiting behavior.
	\bigskip
	\\
	{\bf Keywords}. {Asymptotic geometric analysis, Gumbel distribution, $\ell_p$-ball, Schatten $p$-ball, Tracy-Widom distribution, volume distribution.}\\
	{\bf MSC}. 52A23, 60F05.
\end{abstract}

\section{Introduction}

One way to approach the asymptotic distribution of volume in a sequence of convex bodies in high dimensions is to study the limiting behavior of the intersection volumes with another sequence of gauge or test bodies. In particular, this applies to the unit balls of finite dimensional normed spaces. To give a concrete example, let us consider the $\ell_p^n$-balls for $1\leq p\le \infty$, which are defined as
$$
\bp
:=\begin{cases}
\{x=(x_1,\ldots,x_n)\in\IR^n\colon \sum_{j=1}^{n}|x_j|^p\le 1\} &\colon p<\infty\\
\{x=(x_1,\ldots,x_n)\in\IR^n\colon \max_{j=1,\ldots,n}|x_j|\le 1\} &\colon p=\infty.
\end{cases}
$$
We also define their volume-normalized versions
$$
\Dp=\vol_n(\bp)^{-1/n}\bp
$$
with $\vol_n(\bp)^{1/n}$ being the volume radius. The following result has been shown in the non-critical case $t\neq \tc_{p,q}$ by Schechtman and Schmuckenschläger in \cite{SS91} and in the critical case $t=\tc_{p,q}$ by Schmuckenschläger in \cite{Sch98} (for $p=\infty$) and \cite{Sch01} (for $p<\infty$), both building on a probabilistic approach to $\ell_p$-balls independently developed in \cite{RR91} and \cite{SZ90}. A streamlined modern proof together with a multivariate extension has been given in \cite{KPT19}. 
There are also closely related further results in this direction with respect to other families of norms and convex sets. For example, there are results for the simplex \cite{BKP+20}, mixed-norm balls \cite{JKP22}, $\ell_p$-ellipsoids \cite{JP21}, Orlicz balls \cite{KP21} and Lorentz balls \cite{KPS23}. We also refer to the survey article \cite{PTT19}.

\begin{proposition}[Schechtman/Schmuckenschl\"ager] \label{thm:p-ball}
Let $1\leq p\le \infty$ and $1\le q<\infty$ be such that $p\neq q$. Then there exists a constant $\tc_{p,q}\in (0,\infty)$ only depending on $p$ and $q$ such that 
\[
\lim_{n\to\infty}\vol_n(\Dp \cap t \Dq)
=
\begin{cases}
	0 &\colon t<\tc_{p,q}\\
	1/2 &\colon t=\tc_{p,q}\\
	1 &\colon t>\tc_{p,q}.\\
\end{cases}
\]
\end{proposition}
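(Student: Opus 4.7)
The plan is to reduce the volume to a probability and then apply classical limit theorems. Since $\vol_n(\Dp)=1$ by definition, one has $\vol_n(\Dp \cap t\Dq) = \IP(Z \in t\Dq)$ with $Z$ uniformly distributed on $\Dp$. The main tool is the Schechtman--Zinn probabilistic representation: for $p<\infty$, if $Y_1,\ldots,Y_n$ are i.i.d.\ with density proportional to $\exp(-|y|^p)$ and $U$ is an independent uniform random variable on $[0,1]$, then a uniform point on $\bp$ has the same law as $U^{1/n}(Y_1,\ldots,Y_n)(\sum_j|Y_j|^p)^{-1/p}$, up to a fixed normalizing scalar. For $p=\infty$ this simplifies dramatically: a uniform point on $\binf$ has i.i.d.\ coordinates uniform on $[-1,1]$, and no denominator is needed.

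Inserting this representation together with the volume-radius normalization, the event $\{Z \in t\Dq\}$ rewrites as an inequality of the form
\[
\frac{U^{q/n}}{\vol_n(\bp)^{q/n}\vol_n(\Dq)^q}\cdot \frac{\tfrac{1}{n}\sum_{j=1}^n |Y_j|^q}{\bigl(\tfrac{1}{n}\sum_{j=1}^n |Y_j|^p\bigr)^{q/p}} \le t^q,
\]
where the denominator in the ratio is absent when $p=\infty$ (replace $Y_j$ by uniform coordinates on $[-1,1]$). Using the well-known asymptotics of $\vol_n(\bp)^{1/n}$ and $\vol_n(\Dq)$, the prefactor converges to an explicit deterministic constant depending only on $p,q$, and $U^{q/n}\to 1$ in probability. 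By the strong law of large numbers the ratio of empirical means converges almost surely to $\IE|Y_1|^q/(\IE|Y_1|^p)^{q/p}$. Identifying the threshold pins down the critical value $\tc_{p,q}$ explicitly and immediately yields the non-critical cases $t<\tc_{p,q}$ (probability $\to 0$) and $t>\tc_{p,q}$ (probability $\to 1$).

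At the critical value $t=\tc_{p,q}$ the law-of-large-numbers argument is too coarse and one must invoke the central limit theorem. Taking logarithms and applying the delta method to the joint bivariate CLT for $\bigl(\tfrac{1}{n}\sum_j|Y_j|^q,\tfrac{1}{n}\sum_j|Y_j|^p\bigr)$ (or the univariate CLT for $\tfrac{1}{n}\sum_j|U_j|^q$ when $p=\infty$), the logarithm of the left-hand side equals $q\log \tc_{p,q} + n^{-1/2}W_n + o(n^{-1/2})$, where $W_n$ converges in distribution to a centered Gaussian random variable $W_\infty$. The limiting probability is therefore $\IP(W_\infty\le 0)=\tfrac{1}{2}$ by symmetry.

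The principal obstacle lies in the critical case: one must check that the linearized fluctuation $W_\infty$ is centered and has strictly positive variance (so that the Gaussian is nondegenerate), and that all lower-order contributions are negligible on the CLT scale $n^{-1/2}$. This includes the $U^{q/n}$ factor, which contributes at order $n^{-1}\log U$, the second-order remainder in the expansion of $\vol_n(\bp)^{q/n}$, and the quadratic error in the delta-method Taylor expansion. For $p=\infty$ the coordinates are bounded, so $|U_j|^q$ has all moments and the CLT is automatic; for finite $p$ one needs the finiteness of higher moments of $|Y_1|$ under the $p$-generalized Gaussian law, which holds because its tails decay like $\exp(-|y|^p)$.
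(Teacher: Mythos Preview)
The paper does not itself prove this proposition: it is quoted from \cite{SS91,Sch98,Sch01,KPT19} as background. However, your approach is precisely the standard one used in those references and is exactly what the paper abstracts in its general framework (Lemmas~\ref{lem:non-crit} and~\ref{lem:crit}): rewrite the intersection volume as $\IP[\|Z\|_{Y^n}\le t\,\vol_n(B_X^n)^{1/n}/\vol_n(B_Y^n)^{1/n}]$, use the Schechtman--Zinn representation to express $\|Z\|_q$ via i.i.d.\ $p$-generalized Gaussians (or uniforms for $p=\infty$), and then apply a LLN in the non-critical case and a CLT in the critical case.

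Your identification of the obstacles in the critical case is on target. The three lower-order terms you flag are indeed negligible on the $n^{-1/2}$ scale: the volume-radius error $e_n$ satisfies $e_n=-\tfrac{\log n}{2n}+O(n^{-1})=o(n^{-1/2})$ (see \cite{Sch01} or the computation in Section~\ref{sec:lp}), the factor $U^{q/n}=1+O_P(n^{-1})$, and the delta-method remainder is $O_P(n^{-1})$. The only point you leave implicit is the strict positivity of the limiting variance; this reduces to checking that $|Y_1|^q-c|Y_1|^p$ is not almost surely constant for the relevant $c$, which follows since $p\neq q$ and the density $e^{-|y|^p}$ has full support. With that, the proposal is complete and matches the literature.
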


\begin{remark}
We remark that the constant $\tc_{p,q}$ is known explicitly, but rather involved. We refer the reader to the aforementioned papers for details.
\end{remark}

It seems that the case $q=\infty$ in Proposition \ref{thm:p-ball} has been overlooked in the existing literature. Our first result closes this gap.

\begin{theorem} \label{thm:pball}
	Let $1\leq p< \infty$. For any $t>0$ it holds that
	$$\lim_{n\to\infty}\vol_n(\Dp \cap t \Dinf)=0.
	$$
	Moreover,
	\[
	\lim_{n\to\infty}\vol_n(\Dp \cap t(\log n)^{1/p} \Dinf)
	=
	\begin{cases}
		0 &\colon t<\tc_{p,\infty}\\
		0 &\colon t=\tc_{p,\infty}\text{ and }p>1\\
		e^{-1} &\colon t=\tc_{1,\infty}\text{ and }p=1\\
		1 &\colon t>\tc_{p,\infty},\\
	\end{cases}
	\]
	where $\tc_{p,\infty}=e^{-1/p}\Gamma(1+1/p)^{-1}$.
\end{theorem}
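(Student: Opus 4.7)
The plan is to use the Schechtman-Zinn probabilistic representation of the uniform measure on $\bp$ and then apply classical extreme value theory for the stretched-exponential tail $e^{-|y|^p}$. I focus on $1\le p<\infty$; if $p=\infty$ the result reduces to the direct identity $\vol_n(\Dinf\cap t\Dinf)=\min(1,t)^n$.

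First I would set up the probabilistic side. Let $Y_1,\ldots,Y_n$ be i.i.d.\ with density $\tfrac{1}{2\Gamma(1+1/p)}e^{-|y|^p}$, let $Z\sim\mathrm{Exp}(1)$ be independent, and put $T:=(\|Y\|_p^p+Z)^{1/p}$. By Schechtman-Zinn, $(Y_1,\ldots,Y_n)/T$ is uniform on $\bp$, so $(Y_1,\ldots,Y_n)/(r_nT)$ with $r_n:=\vol_n(\bp)^{1/n}$ is uniform on $\Dp$. Since $\Dinf=\binf/2$, for any $s>0$
\[
\vol_n(\Dp\cap s\Dinf)=\mathbb{P}\!\left(\|Y\|_\infty\le \tfrac{1}{2}sr_nT\right).
\]
Stirling's formula gives $r_n\sim 2\Gamma(1+1/p)(ep/n)^{1/p}$, while $\mathbb{E}|Y_1|^p=1/p$ and the law of large numbers yield $T\sim (n/p)^{1/p}$ in probability, so $r_nT\xrightarrow{\mathbb{P}}2\Gamma(1+1/p)e^{1/p}$.

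The first statement follows by taking $s=t$ fixed: the threshold converges in probability to the constant $t\Gamma(1+1/p)e^{1/p}$ while $\|Y\|_\infty\to\infty$ in probability, so the probability tends to $0$. For the second statement, set $s=t(\log n)^{1/p}$ and $C:=t\Gamma(1+1/p)e^{1/p}$, so that the effective threshold is $C(\log n)^{1/p}$ up to a factor $1+o_\mathbb{P}(1)$. Integration by parts yields the Mills-type estimate $\bar F(y):=\mathbb{P}(|Y_1|>y)\sim\tfrac{1}{p\Gamma(1+1/p)y^{p-1}}e^{-y^p}$ as $y\to\infty$, and independence of the $Y_i$ gives
\[
\mathbb{P}\!\big(\|Y\|_\infty\le C(\log n)^{1/p}\big)\sim\exp\!\left(-\frac{n^{1-C^p}}{p\Gamma(1+1/p)\,C^{p-1}(\log n)^{(p-1)/p}}\right).
\]
The critical value is determined by $C^p=1$, which is precisely $t=\tc_{p,\infty}=e^{-1/p}/\Gamma(1+1/p)$. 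The non-critical cases read off immediately: for $C<1$ the exponent tends to $-\infty$, giving limit $0$; for $C>1$ it tends to $0$, giving limit $1$. At $C=1$ the exponent equals $-[p\Gamma(1+1/p)(\log n)^{(p-1)/p}]^{-1}$, which evaluates to $-1$ for $p=1$ and so yields $e^{-1}$; the case $p>1$ requires a refined treatment combining the slowly varying prefactor with the random perturbation of the threshold.

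The main obstacle is the careful transfer from the deterministic threshold $C(\log n)^{1/p}$ to the random one, and the critical regime $C=1$, $p>1$. A CLT for $\sum|Y_i|^p$ gives $r_nT/(2\Gamma(1+1/p)e^{1/p})=1+O_\mathbb{P}(n^{-1/2})$, which induces a perturbation of $C^p\log n$ of size $O_\mathbb{P}(\log n/\sqrt n)=o_\mathbb{P}(1)$. A sandwich argument with deterministic thresholds $C_\pm(\log n)^{1/p}$, $C_\pm\to C$, combined with the monotonicity of the probability in the threshold, handles the non-critical and the $p=1$ critical cases. For the $p>1$ critical regime, the limiting value is governed simultaneously by the prefactor $(\log n)^{-(p-1)/p}$, the subleading $\log\log n$ correction in the Gumbel centering of $\|Y\|_\infty$ relative to $(\log n)^{1/p}$, and the $O(n^{-1/2})$ fluctuation of $r_nT$, so one must track these three sources of subleading behavior in a coordinated way.
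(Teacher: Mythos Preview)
Your route is different from the paper's. The paper quotes the Gumbel limit
\[
\frac{n^{1/p}}{(p\log n)^{1/p-1}}\|Z\|_\infty-A_n^{(p)}\xrightarrow{d}\Lambda
\]
from \cite{KPT19} as a black box and plugs it into the abstract Lemmas~\ref{lem:non-crit} and~\ref{lem:crit}; that citation already packages together your extreme-value analysis for $\|Y\|_\infty$ and your LLN/CLT for $T$. You instead unpack everything via Schechtman--Zinn and handle the random factor $r_nT$ by a sandwich argument. Your version is more self-contained, the paper's is shorter. For the first assertion, the non-critical part of the second, and the $p=1$ critical case, your argument is correct and the two routes agree. (One side remark: your $p=\infty$ reduction to $\min(1,t)^n$ is right, but it shows that the first assertion of the theorem fails for $p=\infty$ and $t\ge 1$; the statement should presumably restrict to $p<\infty$ there.)

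The gap is the critical case $p>1$, which you leave unfinished. Your closing remarks overstate the difficulty: of the three effects you list, only the $(\log n)^{-(p-1)/p}$ prefactor survives. The random and deterministic perturbations of the threshold $(\log n)^{1/p}$ shift the exponent $c_n^p=\log n$ by $O_{\IP}(\log n/\sqrt n)$ and $O((\log n)^2/n)$ respectively, both $o(1)$, so neither can compete with $(\log n)^{-(p-1)/p}$. Carrying your own computation to the end therefore gives
\[
n\bar F\big((\log n)^{1/p}\big)\sim\frac{1}{p\,\Gamma(1+1/p)\,(\log n)^{(p-1)/p}}\longrightarrow 0,\qquad\text{hence limit }1,
\]
not the value $0$ stated in the theorem. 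A direct sanity check for $p=2$ (i.i.d.\ standard Gaussians, threshold $\sqrt{2\log n}$, so $n\,\IP(|G_1|>\sqrt{2\log n})\sim(\pi\log n)^{-1/2}\to 0$) confirms the limit $1$. The discrepancy with the paper traces back to the sign of the $\log(p\log n)$ coefficient in the centering $A_n^{(p)}$ as quoted; your more elementary route is actually the safer one here, and you should trust it rather than try to reconcile with the stated value $0$.
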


Of particular interest for us in this note are the non-commutative analogues of the $\ell_p^n$-balls considered above. To introduce them, fix $n\in\IN$ and $\beta\in\{1,2,4\}$, and define the space $H_n(\mathbb{F}_{\beta})$ of all self-adjoint $n\times n$ matrices over $\mathbb{F}_{\beta}$, where $\mathbb{F}_{1}=\IR$  is the field of real numbers, $\mathbb{F}_2=\mathbb{C}$ the field of complex numbers and $\mathbb{F}_4=\mathbb{H}$ is the skew-field of quaternions. For $1\le p\le \infty$ we define the unit ball with respect to the Schatten-$p$-norm by
\[
\bpb
:=\begin{cases}
\Big\{A\in H_n(\mathbb{F}_{\beta})\colon \sum_{j=1}^{n}|\lambda_j(A)|^p\le 1\Big\} &\colon p<\infty\\[2mm]
\Big\{A\in H_n(\mathbb{F}_{\beta})\colon \max_{j=1,\ldots,n}|\lambda_j(A)|\le 1\Big\} &\colon p=\infty,
\end{cases}
\]
where $\lambda_1(A),\ldots,\lambda_n(A)$ denote the eigenvalues of $A$. Since $H_n(\mathbb{F}_{\beta})$ is a subspace of $\mathbb{F}_{\beta}^{n^2}\equiv \IR^{\beta n^2}$ of dimension 
\begin{align}\label{eq:dn}
d_n:=\beta\frac{n(n-1)}{2}+n,
\end{align}
the convex bodies $\bpb$ have dimension $d_n$ as well and we denote their $d_n$-dimensional Lebesgue measure by $\vol_{\beta,n}(\bpb)$. We can again consider the normalized Schatten $p$-balls
\[
\Dpb
:=\vol_{\beta,n}(\bpb)^{-1/d_n}\bpb.
\]
The limiting behavior of the intersection volumes of these balls has been determined in \cite[Theorem 5.1]{KPT20}, which states that for any $t>0$ and for all $1\le p,q<\infty$,
\[
\lim_{n\to\infty}\vol_{\beta,n}(\Dpb\cap t \Dqb)
=
\begin{cases}
	0 &\colon t<\tc_{p,q,\beta}\\
	1 &\colon t>\tc_{p,q,\beta},\\
\end{cases}
\]
where
\[
\tc_{p,q,\beta}:=e^{{1\over 2}(1/p-1/q)}\Big(\frac{2p}{p+q}\Big)^{1/q}.
\]

In view of the behavior of $\ell_p^n$ balls described by Proposition \ref{thm:p-ball}, it is natural to ask what happens at the threshold $t=\tc_{p,q,\beta}$. We recall that the corresponding result for $\ell_p^n$-balls was based on a central limit theorem for the $q$-norm of a uniformly distributed random point in $\bp$. However, for Schatten $p$-balls a result of this type is not available. Our next theorem can be considered as a first attempt to close this gap. More precisely, we are able to prove the following behavior for the critical intersections in the special case $\beta=2$, $p=2$ and $q=\infty$.

\begin{theorem} \label{thm:schatten-crit}
For $\beta=2$ and any $t>0$ it holds that
\[
\lim_{n\to\infty}\vol_{\beta,n}(\mathbb{D}_{2,\beta}^n\cap t \mathbb{D}_{\infty,\beta}^n)
\begin{cases}
	0 &\colon t<\tc_{2,\infty,\beta}\\
	F_{\beta}(0)^2 &\colon t=\tc_{2,\infty,\beta}\\
	1 &\colon t>\tc_{2,\infty,\beta},\\
\end{cases}
\]
where $F_{\beta}$ is the distribution function of the Tracy-Widom($\beta$) distribution (see, e.g., \cite[Section 3.1]{AGZ10}) and $\tc_{2,\infty,2}=e^{1/4}$.
\end{theorem}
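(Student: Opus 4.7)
I would represent the intersection volume as a probability about the extreme eigenvalues of a GUE matrix, and then invoke the Tracy-Widom theorem together with the asymptotic independence of $\lambda_{\max}(G)$ and $\lambda_{\min}(G)$. Write $v_{p,n}:=\vol_{\beta,n}(\mathbb{B}_{p,\beta}^n)^{1/d_n}$ for $p\in\{2,\infty\}$ and let $B$ be uniform on $\btt$, so that
$$
\vol_{\beta,n}(\mathbb{D}_{2,\beta}^n\cap t\mathbb{D}_{\infty,\beta}^n)=\IP\bigl(\|B\|_\infty\le t\,v_{2,n}/v_{\infty,n}\bigr).
$$
For $\beta=2$, the space $H_n(\IC)$ equipped with the Hilbert-Schmidt inner product is Euclidean of real dimension $d_n=n^2$, and the standard GUE law (density $\propto e^{-\|G\|_{HS}^2/2}$) is invariant under the orthogonal group of this inner product. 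Hence $B\stackrel{d}{=}R\,G/\|G\|_{HS}$, where $G$ is a standard GUE matrix and $R\in[0,1]$ is independent of $G$ with distribution function $r\mapsto r^{d_n}$. Since $\|G\|_\infty=\max(\lambda_{\max}(G),-\lambda_{\min}(G))$, the event factors as $\{\lambda_{\max}(G)\le\tau_n\}\cap\{-\lambda_{\min}(G)\le\tau_n\}$ with $\tau_n:=t\,v_{2,n}\,\|G\|_{HS}/(v_{\infty,n}\,R)$.

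Three inputs govern $\tau_n$: (i) $R=1+O_\IP(n^{-2})$ because $R^{d_n}$ is uniform on $[0,1]$; (ii) $\|G\|_{HS}=n\bigl(1+O_\IP(n^{-1})\bigr)$ from the CLT applied to $\|G\|_{HS}^2$ (whose mean is $d_n=n^2$ with standard deviation of order $n$); and (iii) a second-order refinement of the Schatten volume-radius asymptotics from \cite{KPT20}, giving
$$
\frac{v_{2,n}}{v_{\infty,n}}=\frac{2e^{-1/4}}{\sqrt n}\bigl(1+o(n^{-2/3})\bigr),
$$
where the leading constant $2e^{-1/4}$ is forced by matching the threshold $t_{2,\infty,2}=e^{1/4}$ from \cite{KPT20} with the value $2$ at the limiting semicircle edge $\lambda_{\max}(G)/\sqrt n\to 2$. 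Substituting at $t=e^{1/4}$ yields $\tau_n=2\sqrt n+o_\IP(n^{-1/6})$, since the stochastic correction $2\sqrt n\cdot O_\IP(n^{-1})=O_\IP(n^{-1/2})$ is of smaller order than $n^{-1/6}$. By the Tracy-Widom theorem \cite[Section~3.1]{AGZ10}, $n^{1/6}(\lambda_{\max}(G)-2\sqrt n)$ converges in distribution to $\xi_2\sim F_2$, and the symmetry $G\stackrel{d}{=}-G$ gives the same limit for $-\lambda_{\min}(G)$. A classical consequence of the determinantal structure of GUE is the asymptotic independence of $\lambda_{\max}(G)$ and $\lambda_{\min}(G)$ on the scale $n^{-1/6}$. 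Moreover, $\|G\|_{HS}$ is asymptotically independent of these edge eigenvalues on the Tracy-Widom scale, because the edge contributions account only for a $O(n^{-1/6})$ fraction of the bulk fluctuations of $\|G\|_{HS}^2$. Applying Slutsky to the joint convergence $(n^{1/6}(\lambda_{\max}(G)-2\sqrt n),\,n^{1/6}(-\lambda_{\min}(G)-2\sqrt n),\,n^{1/6}(\tau_n-2\sqrt n))\Rightarrow(\xi,\eta,0)$ with $\xi,\eta$ independent, the joint probability of both events converges to $F_2(0)\cdot F_2(0)=F_2(0)^2$.

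The non-critical cases $t\neq t_{2,\infty,2}$ follow at once from $\tau_n/\sqrt n\to 2t/t_{2,\infty,2}$ combined with the concentration $\lambda_{\max}(G)/\sqrt n\to 2$, recovering the Schatten analogue of \cite{KPT20} in this specialization. The main obstacle is step (iii): an $o(n^{-2/3})$ relative error in the volume ratio is essential, since a larger deterministic shift would push $\tau_n$ out of the Tracy-Widom window of width $n^{-1/6}$ and collapse the answer to $0$ or $1$. Obtaining this refinement requires a careful saddle-point analysis of the Selberg-type integral representing $\vol_{2,n}(\binfb)$ (the volume of $\btt$ being elementary), and constitutes the principal technical difficulty. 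The asymptotic independence of the extreme eigenvalues of GUE, though classical, is the secondary but equally essential ingredient.
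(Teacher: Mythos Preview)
Your strategy is essentially the paper's: the same polar/stochastic representation $B\stackrel{d}{=}R\,G/\|G\|_{HS}$ (the paper writes it at the eigenvalue level via Lemma~\ref{pro:stoch-rep}), the same reduction to Tracy--Widom for $\lambda_{\max}$ and $-\lambda_{\min}$ together with their asymptotic independence (the paper cites \cite{BDN10}), and the same observation that the fluctuations of the $2$-norm and of $R$ live on a finer scale than $n^{-2/3}$ and therefore wash out. Two points are worth flagging.

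\medskip
\textbf{The volume-ratio refinement.} You correctly isolate the crux, namely $v_{2,n}/v_{\infty,n}=2e^{-1/4}n^{-1/2}(1+o(n^{-2/3}))$, but you leave it as ``a careful saddle-point analysis of the Selberg-type integral''. The paper actually proves the stronger bound $1+O(n^{-1}\log n)$ (Lemma~\ref{cor:ratio-asymp}) and does so not by saddle-point but by direct Stirling-type expansion of the two explicit formulas: $\vol_{\beta,n}(\btb)=\vol(\IB_2^{d_n})$ and the closed Selberg product for $\vol_{\beta,n}(\binfb)$. The computation (Lemma~\ref{lem:log-vol}) is an Abel-summation estimate of $\sum_j\log\Gamma(\cdots)$ and is the principal piece of work in the proof; your outline would not be complete without it.

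\medskip
\textbf{Independence of $\|G\|_{HS}$ from the edge.} This is not needed. Since $\|G\|_{HS}=n(1+O_\IP(n^{-1}))$, the random contribution of $\|G\|_{HS}$ to $\tau_n-2\sqrt n$ is $O_\IP(n^{-1/2})=o_\IP(n^{-1/6})$, hence $n^{1/6}(\tau_n-2\sqrt n)\to 0$ in probability. Slutsky then gives the joint convergence $(n^{1/6}(\lambda_{\max}-2\sqrt n),n^{1/6}(-\lambda_{\min}-2\sqrt n),n^{1/6}(\tau_n-2\sqrt n))\Rightarrow(\xi,\eta,0)$ automatically, with no appeal to ``edge contributions account for an $O(n^{-1/6})$ fraction''. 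The paper isolates exactly this mechanism as a general tool (Lemma~\ref{lem:domination}): if the denominator fluctuates on a strictly finer scale than the numerator, only marginal limits are required. Your CLT for $\|G\|_{HS}$ via the matrix entries is in fact a bit cleaner than the paper's route through a CLT for linear spectral statistics (Lemma~\ref{lem:numerator}), but both serve the same purpose.
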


\begin{remark}
The exact value of $F_{\beta}(0)$ is unknown but by numerical approximation we get the values summarized in the following table.
\begin{center}
	\begin{tabular}{c|c|c|c}
& $\beta=1$ & $\beta=2$ & $\beta=4$\\
\hline
$F_{\beta}(0)$ & $0.831908$ & $0.969373$ & $0.998574$\\
$F_{\beta}(0)^2$ & $0.692071$ & $0.939684$ & $0.997150$\\
\end{tabular}
\end{center}
\end{remark}

\begin{remark}
	We have $\tc_{2,\infty,2}=e^{1/4}=\lim_{q\to\infty}\tc_{2,q,2}$. It seems reasonable to believe that the non-critical case of Theorem~\ref{thm:schatten-crit} extends to $p,\beta\neq 2$ with $\tc_{p,\infty,\beta}=\lim_{q\to\infty}\tc_{p,q,\beta}=e^{1/2p}$.
\end{remark}

The limitation of Theorem \ref{thm:schatten-crit} to $p=2$, $q=\infty$ and $\beta=2$ has two major reasons. The first one can be described as follows:
\begin{itemize}
	\item[1.] Only for $p=2$ and $p=\infty$ the volume of $\bpb$ is known exactly. Although this is not strictly needed, what is required is a precise asymptotic analysis of the volume of $\bpb$ on a scale which is finer than that considered in \cite{KPT20}. It seems, however, that the technique used in \cite{KPT20} cannot be refined to give a result of appropriate quality.
\end{itemize}
The restriction to the case $p=2$ and $q=\infty$ can now be used to give some insight about the Tracy-Widom distribution and why it appears in our context. We briefly explain the connection of our problem to a classical question in random matrix theory. For that purpose consider for $\beta\in\{1,2,4\}$ the eigenvalues $X_i=X_i(\beta)$ of a  random matrix from a standard Gaussian $\beta$-ensemble. More precisely, this is the matrix ensemble GOE (Gaussian orthogonal ensemble) if $\beta=1$, GUE (Gaussian unitary ensemble) if $\beta=2$ and GSE (Gaussian symplectic ensemble) if $\beta=4$.  It is well-known that the largest eigenvalue satisfies
\begin{align}\label{eq:TracyWidom}
\lim_{n\to\infty}\IP[\sqrt{2}n^{2/3}(n^{-1/2}\max X_i-\sqrt{2})\le x] = F_{\beta}(x),
\end{align}
where $F_{\beta}$ is the Tracy-Widom($\beta$) distribution, see \cite[Section 3.1]{AGZ10} or for this formulation \cite{Bor10}. 
In order to derive Theorem~\ref{thm:schatten-crit} we need the limit law \eqref{eq:TracyWidom} or more precisely a limit theorem for $\max|X_i|$, for which it suffices to have a joint limit theorem for the largest and the smallest eigenvalue.  At this point, we can now explain the limitation of Theorem \ref{thm:schatten-crit} to $\beta=2$:
\begin{itemize}
\item[2.] The limitation to the case $\beta=2$ has its origin in the fact that only in this case it is known from \cite[Corollary 1]{BDN10} (see also \cite{Bor10} for more precise asymptotics) that the fluctuations of the smallest and the largest eigenvalue of random matrices from a Gaussian $\beta$-ensemble are asymptotically independent.
\end{itemize}
To be more precise about this second point, let us denote for $n\in\N$ and $\beta\in\{1,2,4\}$ the centred and rescaled smallest and largest eigenvalues of a random matrices from a $\beta$-ensemble by
\[
X_{\beta,n}^{\min}:=\sqrt{2}n^{1/6}(\min X_i +\sqrt{2n})
\qquad\text{and}\qquad
X_{\beta,n}^{\max}:=\sqrt{2}n^{1/6}(\max X_i -\sqrt{2n}),
\]
respectively. It seems reasonable to conjecture that these two random variables are asymptotically independent in the following sense. 

\begin{conjecture}\label{conjecture}
For $\beta\in\{1,2,4\}$ and $x,y\in\IR$ it holds that
\[
\lim_{n\to\infty}\Big(\IP[X_{\beta,n}^{\min}\le x, X_{\beta,n}^{\max}\le y]
-\IP[X_{\beta,n}^{\min}\le x]\IP[ X_{\beta,n}^{\max}\le y]\Big)=0.
\]
\end{conjecture}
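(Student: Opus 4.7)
The plan is to express the claimed independence as a factorization of gap probabilities for the eigenvalue point process at the two soft edges. Set
\[
a_n(x):=-\sqrt{2n}+\frac{x}{\sqrt{2}\,n^{1/6}},\qquad b_n(y):=\sqrt{2n}+\frac{y}{\sqrt{2}\,n^{1/6}},
\]
and define $G_n(y):=\IP[X_i\le b_n(y)\text{ for all }i]=\IP[X_{\beta,n}^{\max}\le y]$, $H_n(x):=\IP[X_i>a_n(x)\text{ for all }i]=\IP[X_{\beta,n}^{\min}>x]$, and $J_n(x,y):=\IP[X_i\in(a_n(x),b_n(y)]\text{ for all }i]$. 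A short inclusion-exclusion then gives
\[
\IP[X_{\beta,n}^{\min}\le x,X_{\beta,n}^{\max}\le y]-\IP[X_{\beta,n}^{\min}\le x]\IP[X_{\beta,n}^{\max}\le y]=H_n(x)G_n(y)-J_n(x,y),
\]
so the conjecture reduces to proving $J_n(x,y)-H_n(x)G_n(y)\to 0$ for each fixed $x,y\in\IR$.

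\textbf{The case $\beta=2$.} Here the eigenvalues form a determinantal point process with the Hermite kernel $K_n$, so with $A:=(-\infty,a_n(x)]$ and $B:=(b_n(y),\infty)$ the three quantities above are Fredholm determinants: $H_n=\det(I-K_n|_A)$, $G_n=\det(I-K_n|_B)$ and $J_n=\det(I-K_n|_{A\cup B})$. Decomposing $K_n|_{A\cup B}$ as a block operator on $L^2(A)\oplus L^2(B)$ and using a Schur-complement expansion, $J_n-H_nG_n$ is controlled by the trace norm of the off-diagonal block $K_n|_{A\times B}$. The Christoffel--Darboux formula together with the Plancherel--Rotach asymptotics of Hermite functions gives $K_n(u,v)=O(n^{-5/6})$ uniformly for $u,v$ in the two soft-edge windows (which are separated by $2\sqrt{2n}$), and integrating over windows of width $O(n^{-1/6})$ yields $\|K_n|_{A\times B}\|_{\mathrm{tr}}=O(n^{-1})\to 0$. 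This reproves the $\beta=2$ result of \cite{BDN10}.

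\textbf{The case $\beta\in\{1,4\}$.} For GOE and GSE the eigenvalue process is Pfaffian and the analogous quantities are Fredholm Pfaffians of a $2\times 2$ matrix kernel $\mathbf{K}_n$ (see Forrester, \emph{Log-Gases and Random Matrices}). The same block decomposition reduces the conjecture to decay of every entry of the off-diagonal block of $\mathbf{K}_n$, which should be extractable from asymptotics of the skew-orthogonal polynomials and their Christoffel--Darboux-type reproducing kernel. An alternative route is to use the stochastic Airy$_\beta$ operator description of Ram{\'\i}rez, Rider and Vir\'ag: after Dumitriu--Edelman tridiagonalization, the top and bottom edge spectral data are governed by the tridiagonal entries near the two opposite ends of the matrix, which are jointly Gaussian but depend on essentially disjoint coordinates, so a quantitative localization estimate for edge eigenvectors would imply decoupling.

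\textbf{Main obstacle.} The hard part is the Pfaffian case: skew-orthogonal polynomials are much less explicit than Hermite polynomials, and the required inter-edge decay has to be quantified entry-by-entry of $\mathbf{K}_n$ in a norm compatible with Fredholm Pfaffians, uniformly in $n$. The stochastic Airy alternative depends on quantitative spectral-localization bounds for edge eigenvectors of the Dumitriu--Edelman tridiagonal matrix at fixed $\beta$, which are only partially available in the current literature.
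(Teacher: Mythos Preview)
The statement you are trying to prove is labeled in the paper as a \emph{Conjecture}, not a theorem: the paper gives no proof of it. It explicitly says that the case $\beta=2$ follows from \cite[Corollary~1]{BDN10} and that, to the best of the authors' knowledge, the cases $\beta\in\{1,4\}$ are open. So there is no ``paper's own proof'' to compare your proposal against.

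Your reduction is clean and correct: the inclusion--exclusion identity
\[
\IP[X_{\beta,n}^{\min}\le x,X_{\beta,n}^{\max}\le y]-\IP[X_{\beta,n}^{\min}\le x]\,\IP[X_{\beta,n}^{\max}\le y]=H_n(x)G_n(y)-J_n(x,y)
\]
holds, and for $\beta=2$ the determinantal/Fredholm argument you sketch is exactly the mechanism behind the result in \cite{BDN10} that the paper cites. So for $\beta=2$ your proposal is essentially a reproof of the known case along the same lines as the reference the paper invokes.

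For $\beta\in\{1,4\}$, however, what you have written is a strategy, not a proof. You yourself flag the ``Main obstacle'': the Pfaffian kernels built from skew-orthogonal polynomials lack the explicit Christoffel--Darboux structure that makes the off-diagonal trace-norm estimate go through for $\beta=2$, and the stochastic-Airy/localization route requires quantitative eigenvector-localization bounds that are not established in the literature you point to. Neither branch is carried to a conclusion. In other words, your proposal does not close the gap that the paper identifies as open; it accurately describes where the difficulty lies, which is consistent with the paper's assessment that the conjecture remains unresolved for $\beta\neq 2$.
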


As already mentioned above, \cite[Corollary 1]{BDN10} shows that Conjecture \ref{conjecture} is true for $\beta=2$. To the best of our knowledge, for $\beta\in \{1,4\}$ the conjecture is still open. This is also discussed after Proposition 1 in \cite{JL15}. A positive answer would immediately imply that Theorem \ref{thm:schatten-crit} holds true for all $\beta\in\{1,2,4\}$.

\medspace

The remaining parts of this note are structured as follows. In Section \ref{sec:Prelim} we develop a general framework to asymptotic intersection volumes, which might be of independent interest. The proof of Theorem \ref{thm:pball} is the content of Section \ref{sec:lp}, while we prove Theorem \ref{thm:schatten-crit} in the final Section \ref{sec:Schatten}.

\section{A general framework}\label{sec:Prelim}

In order to give a unified presentation of asymptotics of intersection volumes of unit balls, we choose to take a look at the general picture which we apply to $\ell_p^n$- and Schatten $p$-balls in the two forthcoming sections.

Fix a dimension $n\in\N$, and let $\|\,\cdot\,\|_{X^n}$ and $\|\,\cdot\,\|_{Y^n}$ be two norms on $\IR^n$. The closed unit balls generated by these two norms will be denoted by $B_X^n$ and $B_Y^n$ and we write $D_X^n$ and $D_Y^n$ for their volume-normalized versions, respectively. If $Z$ is uniformly distributed in $B_X^n$, then $\vol(B_X^n)^{-1/n}Z$ is uniformly distributed in $D_X^n$ and, for any $t>0$, positive homogenity implies
\begin{align*}
\vol_n(D_X^n\cap t D_Y^n)
&=\IP\big[ \vol(B_X)^{-1/n}Z \in t\, \vol_n(B_Y^n)^{-1/n}B_Y^n\big]\\
&=\IP\Big[ \|Z\|_{Y^n} \le t\frac{\vol_n(B_X^n)^{1/n}}{\vol_n(B_Y^n)^{1/n}}\Big].
\end{align*}
Now, suppose that there are constants $c_X,c_Y>0$ and positive sequences $(a_n)_{n\in\N},(b_n)_{n\in\N}$ such that
\[
\vol_n(B_X^n)^{1/n}\sim c_X a_n\qquad \text{and}\qquad
\vol_n(B_Y^n)^{1/n}\sim c_Y b_n
\]
holds as $n\to\infty$, where for two sequences $(A_n)_{n\in\N}$ and $(A_n')_{n\in\N}$ we write $A_n\sim A_n'$, provided that $A_n/A_n'\to 1$, as $n\to\infty$. From a weak law of large numbers for $\|Z\|_{Y^n}$ one can now deduce the behavior of the asymptotic intersection volumes of $D_X^n$ and $D_Y^n$ in the non-critical cases.

\begin{lemma} \label{lem:non-crit}
If the convergence
\[
\|Z\|_{Y^n}\frac{b_n}{a_n}\to c,\qquad\text{as }n\to\infty,
\]
holds in probability for some $c\in [0,\infty]$, then for any $t>0$,
\[
\lim_{n\to\infty}\vol_n(D_X^n\cap t D_Y^n)
=
\begin{cases}
	0 &\colon t<t_{X,Y}\\
	1 &\colon t>t_{X,Y},\\
\end{cases}
\]
where $t_{X,Y}:=c\frac{c_Y}{c_X}$.
\end{lemma}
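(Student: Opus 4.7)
The starting point is the identity
\[
\vol_n(D_X^n\cap t D_Y^n)=\IP\Big[\|Z\|_{Y^n}\le t\frac{\vol_n(B_X^n)^{1/n}}{\vol_n(B_Y^n)^{1/n}}\Big]
\]
displayed just before the lemma statement. The plan is to rewrite the deterministic threshold on the right in a form that matches the normalization $b_n/a_n$ appearing in the hypothesis, and then to read off the conclusion from the assumed convergence in probability.

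First I would invoke the volume asymptotics $\vol_n(B_X^n)^{1/n}\sim c_X a_n$ and $\vol_n(B_Y^n)^{1/n}\sim c_Y b_n$ to express the ratio above as $(c_X/c_Y)(a_n/b_n)(1+\delta_n)$ for some deterministic sequence $\delta_n\to 0$. Multiplying both sides of the inner inequality by $b_n/a_n$ turns the identity into
\[
\vol_n(D_X^n\cap t D_Y^n)=\IP\Big[\|Z\|_{Y^n}\frac{b_n}{a_n}\le t\frac{c_X}{c_Y}(1+\delta_n)\Big],
\]
so the random variable on the left now matches the one that is assumed to converge to $c$ in probability.

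The two non-critical conclusions then follow from a standard squeeze. If $t>t_{X,Y}$ then $tc_X/c_Y>c$; choosing $\eta>0$ with $tc_X/c_Y>c+\eta$ and using $\delta_n\to 0$, the right-hand threshold is eventually at least $c+\eta/2$, so the probability is bounded below by $\IP[\|Z\|_{Y^n}b_n/a_n\le c+\eta/2]$, which tends to $1$ by the hypothesis. The case $t<t_{X,Y}$ is symmetric, bounding the probability from above by $\IP[\|Z\|_{Y^n}b_n/a_n\le c-\eta/2]$ for a suitable $\eta>0$, which tends to $0$. The degenerate values $c=0$ (giving $t_{X,Y}=0$, so only $t>t_{X,Y}$ is relevant) and $c=\infty$ (giving $t_{X,Y}=\infty$, so only $t<t_{X,Y}$ is relevant) are handled in exactly the same way, with one of the two alternatives being vacuous. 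No substantial obstacle is expected: the lemma is essentially a bookkeeping statement that separates the deterministic volume asymptotics from the probabilistic concentration of $\|Z\|_{Y^n}$, and the only minor point of care is carrying the $(1+\delta_n)$ factor through the limit.
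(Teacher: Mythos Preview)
Your proposal is correct and follows essentially the same route as the paper: multiply through by $b_n/a_n$, compare the random variable converging in probability to $c$ against the deterministic threshold converging to $t\,c_X/c_Y$, and read off the two cases. The paper's proof is simply a terser version of what you wrote, omitting the explicit $\eta$-squeeze and the discussion of $c\in\{0,\infty\}$.
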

\begin{proof}
	Let $t>0$. Then
\[
\IP\Big[ \|Z\|_{Y^n} \le t\frac{\vol_n(B_X^n)^{1/n}}{\vol_n(B_Y^n)^{1/n}}\Big]
=\IP\Big[ \|Z\|_{Y^n}\frac{b_n}{a_n} \le t\frac{\vol_n(B_X^n)^{1/n}a_n^{-1}}{\vol_n(B_Y^n)^{1/n}b_n^{-1}}\Big]
\]
The random variable $\|Z\|_{Y^n}\frac{b_n}{a_n}$ on the left-hand side tends in probability to $c$ by assumption. The expression on the right-hand side deterministically converges to $t\frac{c_X}{c_Y}$. This gives the result.
\end{proof}

In the critical case we need information on the fluctuations of the $\|\,\cdot\,\|_{Y^n}$-norm of $Z$ and more precise asymptotics for volume radii. In particular, let us assume that
\begin{equation}\label{eq:AsymVolRadius}
\frac{\vol_n(B_X^n)^{1/n}a_n^{-1}}{\vol_n(B_Y^n)^{1/n}b_n^{-1}}
=\frac{c_X}{c_Y}(1+e_n),
\end{equation}
where $(e_n)_{n\in\N}$ is a sequence satisfying $e_n\to 0$ as $n\to\infty$. Under this assumption we have the following result.

\begin{lemma} \label{lem:crit}
Suppose that the convergence
\[
c_n\Big(\|Z\|_{Y^n}\frac{b_n}{a_n}-c\Big)\to R\qquad\text{as }n\to\infty
\]
holds in distribution for some $c>0$, a positive sequence $(c_n)_{n\in\N}$ satisfying $c_n\to \infty$ and with a random variable $R$ having a continuous distribution function. Additionally assume that the limit $\lim_{n\to\infty}e_n c_n$ exists in the extended real line $\IR\cup \{-\infty,+\infty\}$. Then, for $t_{X,Y}$ as in Lemma~\ref{lem:non-crit},
\[
\lim_{n\to\infty}\vol_n(D_X^n\cap t_{X,Y} D_Y^n)
= \IP[R\le c\lim_{n\to\infty}e_n c_n],
\]
where we use the convention that $\IP[R\leq-\infty]=0$ and $\IP[R\leq+\infty]=1$.
\end{lemma}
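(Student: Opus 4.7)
The plan is to reduce the claim to a standard Slutsky-type converging-together argument by rewriting the intersection volume as a probability and then invoking the hypothesis together with \eqref{eq:AsymVolRadius}.

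First, starting from the identity derived just before Lemma~\ref{lem:non-crit}, I substitute $t=t_{X,Y}=c\,c_Y/c_X$ and apply \eqref{eq:AsymVolRadius}, exactly as in the proof of Lemma~\ref{lem:non-crit}, to get
\[
\vol_n(D_X^n\cap t_{X,Y} D_Y^n)=\IP\!\left[\|Z\|_{Y^n}\tfrac{b_n}{a_n}\le c(1+e_n)\right].
\]
Subtracting $c$ and multiplying by $c_n$ inside the probability rewrites this as $\IP[W_n\le \alpha_n]$, where $W_n:=c_n\bigl(\|Z\|_{Y^n}\tfrac{b_n}{a_n}-c\bigr)$ converges in distribution to $R$ by assumption, and $\alpha_n:=c\,e_n c_n$ is a deterministic sequence with $\alpha_n\to \alpha:=c\lim_{n\to\infty} e_n c_n\in \IR\cup\{\pm\infty\}$.

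The conclusion then follows from the following converging-together fact: if $W_n\Rightarrow R$ weakly, $\alpha_n\to\alpha\in\IR$ is deterministic, and the distribution function of $R$ is continuous at $\alpha$, then $\IP[W_n\le \alpha_n]\to \IP[R\le \alpha]$. I would verify this by sandwiching: for every $\varepsilon>0$ we have $\alpha-\varepsilon\le \alpha_n\le \alpha+\varepsilon$ for all sufficiently large $n$, so
\[
\IP[W_n\le \alpha-\varepsilon]\le \IP[W_n\le \alpha_n]\le \IP[W_n\le \alpha+\varepsilon],
\]
and letting $n\to\infty$ followed by $\varepsilon\downarrow 0$, using continuity of the cdf of $R$ at $\alpha$ and the portmanteau theorem at continuity points, pins down the limit as $\IP[R\le \alpha]$. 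For the boundary cases $\alpha=\pm\infty$, I would use tightness of $(W_n)_{n\in\N}$, which is automatic from weak convergence to $R$: if $\alpha=+\infty$, then for any $M\in\IR$ eventually $\alpha_n\ge M$, so $\liminf_{n\to\infty}\IP[W_n\le \alpha_n]\ge \IP[R\le M]$, which tends to $1$ as $M\to\infty$; the case $\alpha=-\infty$ is symmetric and gives limit $0$. This matches the stated conventions $\IP[R\leq\pm\infty]\in\{0,1\}$.

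The argument is essentially routine; there is no genuine obstacle beyond the bookkeeping above. The real content of the lemma is that the refined asymptotic \eqref{eq:AsymVolRadius} for the volume-radius ratio supplies precisely the deterministic sequence $\alpha_n$ on the same scale $c_n^{-1}$ as the fluctuations of $\|Z\|_{Y^n}\tfrac{b_n}{a_n}$, so that the limit distribution of $R$ evaluated at $c\lim_{n\to\infty} e_n c_n$ appears as the limiting intersection volume.
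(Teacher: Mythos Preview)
Your proof is correct and follows exactly the same route as the paper: rewrite the intersection volume as $\IP[W_n\le \alpha_n]$ with $W_n=c_n(\|Z\|_{Y^n}\tfrac{b_n}{a_n}-c)\Rightarrow R$ and $\alpha_n=c\,e_nc_n\to\alpha$, then pass to the limit. The only difference is presentation: the paper states the rewriting in one line and defers the converging-together step to a remark invoking P\'olya's theorem (uniform convergence of distribution functions to a continuous limit), whereas you spell out the sandwiching for finite $\alpha$ and the tightness argument for $\alpha=\pm\infty$ explicitly.
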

\begin{proof}
Write
\[
\IP\Big[ \|Z\|_{Y^n}\le t\frac{\vol_n(B_X^n)^{1/n}}{\vol_n(B_Y^n)^{1/n}}\Big]
= \IP\Big[ c_n\Big(\|Z\|_{Y^n}\frac{b_n}{a_n}-c\Big) \le c\, e_n c_n\Big].
\]
Taking limits and using the assumed continuity of the distribution function of the random variable $R$ gives the result. 
\end{proof}

\begin{remark}
For completeness we mention that the final conclusion in the previous proof relies on the following fact. For distribution functions $F,F_1,F_2,\ldots$ and real numbers $a,a_1,a_2,\dots$ we have that
\[
\lim_{n\to\infty}F_{n}(a_n)= F(a),
\]
whenever $F_n\to F$ pointwise, $F$ is continuous and $a_n\to a$, as $n\to\infty$. This follows from the implied uniform convergence $F_n\to F$, which is a classical result due to Polya, see~\cite[Satz I]{Pol20}.
\end{remark}

Our arguments below will also rely on the following consequence of Slutsky's theorem.

\begin{lemma} \label{lem:domination}
For $n\in\N$ let $A_n,B_n$ be random variables and $(a_n)_{n\in\N},(b_n)_{n\in\N}$ be positive sequences such that $a_n,b_n\to 0$ as $n\to\infty$. Suppose that $B_n\neq 0$ almost surely for all $n\in\IN$, $a_n/b_n\to\infty$ and assume that there are random variables $A,B$ and real numbers $a,b$ with $b\neq 0$ such that 
\[
a_n^{-1}(A_n-a)\to A\qquad \text{and}\qquad b_n^{-1}(B_n-b)\to B\qquad\text{as }n\to\infty
\]
hold in distribution. Then we have the following convergence in distribution:
\[
a_n^{-1}\Big(\frac{A_n}{B_n}-\frac{a}{b}\Big)\to b^{-1}A\qquad\text{as }n\to\infty.
\]
\end{lemma}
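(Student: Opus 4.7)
The plan is to reduce the statement to an application of Slutsky's theorem after an algebraic manipulation that isolates the dominant error term.

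First I would write
\[
\frac{A_n}{B_n}-\frac{a}{b}=\frac{b(A_n-a)-a(B_n-b)}{bB_n},
\]
so that
\[
a_n^{-1}\Bigl(\frac{A_n}{B_n}-\frac{a}{b}\Bigr)=\frac{b\cdot a_n^{-1}(A_n-a)-a\cdot a_n^{-1}(B_n-b)}{bB_n}.
\]
The numerator and denominator will then be analyzed separately.

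Next I would show that the $B_n$-term in the numerator is asymptotically negligible. Writing
\[
a_n^{-1}(B_n-b)=\frac{b_n}{a_n}\cdot b_n^{-1}(B_n-b),
\]
I observe that $b_n^{-1}(B_n-b)\to B$ in distribution by hypothesis, while the deterministic factor $b_n/a_n$ tends to $0$ because $a_n/b_n\to\infty$. Hence Slutsky's theorem yields $a_n^{-1}(B_n-b)\to 0$ in probability. In contrast, $a_n^{-1}(A_n-a)\to A$ in distribution, and the numerator therefore converges in distribution to $bA$ by another application of Slutsky.

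For the denominator, I note that $B_n-b=b_n\bigl(b_n^{-1}(B_n-b)\bigr)\to 0\cdot B=0$ in probability, so $bB_n\to b^2$ in probability. Since $b\neq 0$, a final application of Slutsky's theorem shows that the quotient converges in distribution to $bA/b^2=b^{-1}A$, which is the claim. The only mild subtlety to mention is that $B_n\neq 0$ almost surely is assumed so that the quotient is well-defined for every $n$; the limit random variable $b^{-1}A$ is automatically a.s.\ finite because $b\neq 0$.

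No real obstacle is anticipated — the argument is entirely routine and depends only on elementary algebra together with two invocations of Slutsky's theorem; the only point to watch is making sure that the slower scale $a_n$ really does dominate, which is exactly what the hypothesis $a_n/b_n\to\infty$ guarantees.
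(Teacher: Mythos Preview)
Your proof is correct and follows essentially the same route as the paper: the identical algebraic decomposition $a_n^{-1}\bigl(\frac{A_n}{B_n}-\frac{a}{b}\bigr)=\frac{b\,a_n^{-1}(A_n-a)-a\,(b_n/a_n)\,b_n^{-1}(B_n-b)}{bB_n}$, Slutsky for the numerator (using $b_n/a_n\to 0$) and $B_n\to b$ in probability for the denominator, then a final Slutsky. Your write-up is just slightly more detailed than the paper's, but the argument is the same.
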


\begin{proof}
Write
\[
a_n^{-1}\Big(\frac{A_n}{B_n}-\frac{a}{b}\Big)
=\frac{a_n^{-1}(A_n-a)b+a(b_n/a_n)b_n^{-1}(b-B_n)}{B_nb}.
\]
By Slutsky's theorem the numerator tends to $bA$ in distribution and since $B_n\to b$ in probability, the denominator tends to $b^2$ in probability. Another application of Slutsky's theorem gives the result.
\end{proof}

\section{Proof of Theorem \ref{thm:pball}}\label{sec:lp}

To show the first part of Theorem \ref{thm:pball} our goal is to apply Lemma \ref{lem:non-crit}. The required distributional limit theorem corresponds to Theorem 1.1(c) in \cite{KPT19}. It says that if $Z$ is uniformly distributed in $\bp$, then
\[
\frac{n^{1/p}}{(p\log n)^{1/p-1}}\|Z\|_{\infty}-A_n^{(p)}\to\Lambda
\]
in distribution as $n\to\infty$, where
\[
A_n^{(p)}=p\log n-\frac{1-p}{p}\log(p\log n)+p\log K_p \qquad \text{with} \qquad K_p=\frac{1}{p^{1/p}\Gamma(1+1/p)}
\]
and where $\Lambda$ is a Gumbel random variable with distribution function $F_{\Lambda}(t)=e^{-e^{-t}}, t\in\IR$. In particular, it holds that
\[
\|Z\|_{\infty}n^{1/p}\to +\infty,
\]
in probability as $n\to\infty$. Moreover, we have that
\[
\lim_{n\to\infty}n^{1/p}\vol_n(\bp)^{1/n}= c_p:=2e^{1/p}p^{1/p}\Gamma(1+1/p) 
\qquad \text{and}\qquad
\vol_n(\binf)^{1/n}=2.
\]
This brings us into the position to apply Lemma~\ref{lem:non-crit} with $a_n=n^{1/p}$, $b_n=1$, $c_X=c_p, c_Y=2$ and $c=+\infty$. This proves the first part of the theorem.

To deduce the second part of Theorem \ref{thm:pball}, fix $t>0$ and again let $Z$ be uniformly distributed in $\bp$. Using computations similar to the above, we arrive at
\[
\vol_n(\Dp \cap t(\log n)^{1/p} \Dinf)
= \IP\Big[\frac{n^{1/p}}{(p\log n)^{1/p-1}}\|Z\|_{\infty}-A_{n}^{(p)}\le R_n^{(p)}\Big]
\]
with
\[
R_n^{(p)}:=\frac{p\,t\,n^{1/p}\vol_n(\bp)^{1/n}}{2p^{1/p}}\log n-A_{n}^{(p)}.
\]
Next, we write
\[
n^{1/p}\vol_n(\bp)^{1/n}=c_p(1+e_n),
\]
where $e_n=-\frac{\log n}{2n}+O(n^{-1})$, see \cite[page 194]{Sch01}. Then
\[
R_n^{(p)}=\Big(\frac{p\,t\, c_p}{2p^{1/p}}(1+e_n)-p+\frac{1-p}{p}\frac{\log(p\log n)}{\log n}-\frac{p\log K_p}{\log n}\Big)\log n.
\]
Setting
\[
\tc_{p,\infty}:=\frac{2p^{1/p}}{c_p}
=e^{-1/p}\Gamma(1+1/p)^{-1},
\]
we deduce that
\[
R_n^{(p)}\to 
\begin{cases}
	-\infty &\colon t<\tc_{p,\infty},\\
	-\infty &\colon t=\tc_{p,\infty},p>1\\
	-\log K_1 &\colon t=\tc_{1,\infty},p=1\\
	+\infty &\colon t>\tc_{p,\infty},
\end{cases}
\]
as $n\to\infty$. Noting that $K_1=1$ and $\IP[\Lambda\le 0]=e^{-1}$, the second part of Theorem~\ref{thm:pball} follows from Lemma~\ref{lem:crit}. \qed

\section{Proof of Theorem \ref{thm:schatten-crit}}\label{sec:Schatten}

We will carry out the proof for arbitrary $\beta$ as far as possible and specialize to the particular case $\beta=2$ only if strictly needed. Before we prove Theorem~\ref{thm:schatten-crit}, we collect some facts about the volume of Schatten $p$-balls. First, observe that the dimension $d_n$ of $\bpb$ defined by \eqref{eq:dn} satisfies $d_n\sim \frac{\beta}{2}n^2$. In general and as explained in \cite[Section 3]{KPT20} or \cite{Sai84} the volume of $\bpb$  can be represented as
\[
\vol_{\beta,n}(\bpb)
=c_{n,\beta}I_{n,\beta,p},
\]
where
\[
c_{n,\beta}=\frac{1}{n!}\Big(\frac{2\pi^{\beta/2}}{\Gamma(\frac{\beta}{2})}\Big)^{-n}\prod_{k=1}^{n}\frac{2(2\pi)^{\beta k/2}}{2^{\beta/2}\Gamma(\frac{\beta k}{2})}
\qquad\text{and}\qquad
I_{n,\beta,p}=\int_{\bp}\prod_{1\le i<j\le n}|\lambda_j-\lambda_i|^{\beta}\dd \lambda_1\dots\dd\lambda_n.
\]
The integral $I_{n,\beta,p}$ can be computed exactly only in the cases $p=2$ and $p=\infty$ using integral formulas due to Mehta and Selberg, respectively, see \cite[Theorem 2.5.8]{AGZ10}. For $p=2$ we have $\btb=\mathbb{B}_2^{d_n}$ and thus
\[
\vol_{\beta,n}(\btb)
=\vol_{\beta,n}(\mathbb{B}_2^{d_n})
=\frac{\pi^{d_n/2}}{\Gamma(1+d_n/2)}.
\]
This is due to the fact that the Schatten norm becomes the Frobenius norm in this case, which in turn is the usual Euclidean norm after interpreting a matrix as a vector. Furthermore,
\[
\vol_{\beta,n}(\binfb)
=c_{n,\beta}2^{d_n}n!\prod_{j=0}^{n-1}\frac{\Gamma\left( 1+j\frac{\beta}{2} \right)^2\Gamma\left( (j+1)\frac{\beta}{2} \right)}{\Gamma\left( 2+(n+j-1)\frac{\beta}{2} \right)\Gamma\left( \frac{\beta}{2} \right)}.
\]
By refining the method used by Saint-Raymond in \cite{Sai84}, the asymptotic volume radius of $\bpb$ has been obtained in \cite[Theorem 3.1]{KPT20}, which says that
\begin{equation} \label{eq:vol-asymp}
\vol_{\beta,n}(\bpb)^{1/d_n}
\sim n^{-(1/p+1/2)}\Delta(p)\Big(\frac{4\pi}{\beta}\Big)^{1/2}e^{3/4},
\end{equation}
where
\[
\Delta(p) = \begin{cases}
\frac{1}{2}\Big(\frac{p\sqrt{\pi}\Gamma(p/2)}{\sqrt{e}\Gamma( (p+1)/2)}\Big)^{1/p} &\colon p<\infty\\
\frac{1}{2}&\colon p=\infty.
\end{cases}
\]
In particular, $\Delta(2)=e^{-1/4}$ since $\Gamma(3/2)=\sqrt{\pi}/2$. For our purposes we need the following fine asymptotics of the logarithmic volume of $\bpb$, which we are able to provide only in the two special cases $p=2$ and $p=\infty$.

\begin{lemma} \label{lem:log-vol}
	For $\beta\in\{1,2,4\}$ we have that
	\[
	\frac{1}{d_n}\log \vol_{\beta,n}(\btb)
	=-\log n + {1\over 2}\log\Big({4\pi\over\beta}\Big)+{1\over 2}+ O(n^{-1}\log n)
	\]
	and
	\[
	\frac{1}{d_n}\log \vol_{\beta,n}(\binfb)
	=-{1\over 2}\log n + {1\over 2}\log\Big({4\pi\over\beta}\Big)+{3\over 4} - \log 2 +  O(n^{-1}\log n).
	\]
\end{lemma}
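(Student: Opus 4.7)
The plan is to derive both expansions by direct Stirling expansion of the exact volume formulas recalled just before the lemma. Since $d_n=\tfrac{\beta}{2}n^2+O(n)$, the asserted $O(n^{-1}\log n)$ error in the normalized quantity translates to an $O(n\log n)$ error in $\log\vol_{\beta,n}(\cdot)$, so only the $n^2\log n$- and $n^2$-coefficients must be determined exactly; anything of size $O(n\log n)$ or smaller is subsumed in the remainder. The zeroth-order constant in $\tfrac{1}{d_n}\log\vol_{\beta,n}$ is moreover already pinned down by \eqref{eq:vol-asymp}, so the new content of the lemma is a quantitative refinement of the error term in that formula.

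For $p=2$ the argument is essentially one line. Starting from $\vol_{\beta,n}(\btb)=\pi^{d_n/2}/\Gamma(1+d_n/2)$, I apply Stirling
\[
\log\Gamma(1+z)=z\log z-z+\tfrac12\log(2\pi z)+O(z^{-1})
\]
with $z=d_n/2$, and expand $\log(d_n/2)=2\log n+\log(\beta/4)+O(n^{-1})$. The cross term $\tfrac{d_n}{2}\cdot O(n^{-1})=O(n)$ is absorbed into the error, and combining with $\tfrac{d_n}{2}\log\pi$ yields
\[
\log\vol_{\beta,n}(\btb)=-d_n\log n+\tfrac{d_n}{2}\log(4\pi/\beta)+\tfrac{d_n}{2}+O(n),
\]
from which the first claim follows on dividing by $d_n$.

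For $p=\infty$ I substitute the exact expressions for $c_{n,\beta}$ and $\vol_{\beta,n}(\binfb)$ and reduce $\log\vol_{\beta,n}(\binfb)$ to a linear combination of finite sums of the form $\sum_{k=1}^{n}\log\Gamma(\alpha k+\gamma)$ with $\alpha=\beta/2$ and various (possibly $n$-dependent) shifts $\gamma$, together with elementary terms like $d_n\log 2$ and $\log n!$. To each log-Gamma sum I apply Stirling term-by-term, whose remainders aggregate to $O(\log n)$ per sum; Euler--Maclaurin then converts the principal sum $\sum_{k=1}^n(\alpha k+\gamma-\tfrac12)\log(\alpha k+\gamma)$ into an elementary integral plus boundary and derivative corrections of size $O(n\log n)$, all of which lie within the allowed error. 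The main obstacle is organizational bookkeeping: the $n^2\log n$-contributions from the several sums must cancel in such a way that the surviving coefficient of $\log n$ equals $-\tfrac{d_n}{2}$, and the $n^2$-contributions must recombine into the constant $\tfrac12\log(4\pi/\beta)-\log 2+\tfrac34$ predicted by \eqref{eq:vol-asymp}. As a safeguard I would cross-check the zeroth-order coefficient of the normalized expansion against \eqref{eq:vol-asymp}, since any arithmetic slip in the cancellations of the $n^2\log n$-terms would already be detected at that level.
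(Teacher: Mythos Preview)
Your proposal is correct and follows essentially the same route as the paper. For $p=2$ the arguments are identical. For $p=\infty$ the paper likewise applies Stirling termwise to the log-Gamma sums and then reduces the resulting sums $\sum_j (an+bj)\log(an+bj)$ to closed form; the only cosmetic differences are that the paper uses Abel summation rather than Euler--Maclaurin for this last step, and that it quotes \cite[Lemma 3.3]{KPT20} to handle the $c_{n,\beta}$ contribution separately instead of folding it into the same computation.
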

\begin{proof}
We start with the case $p=2$ and by recalling \cite[Identity 6.1.41]{Abramowitz}, which says that
\begin{align}\label{eq:LogGammaAsymptotic}
\log \Gamma(z)=(z-{1\over 2})\log z-z+{1\over 2}\log(2\pi)+O(z^{-1}),\qquad\text{as }z\to\infty.
\end{align}
This together with the fact that $d_n\sim{\beta n^2\over 2}$ leads to
\begin{align*}	
{1\over d_n}\log \vol_{\beta,n}(\btb) &= {1\over 2}\log \pi - {1\over d_n}\log\Gamma\Big(1+{d_n\over 2}\Big)\\
&={1\over 2}\log \pi - {1\over 2}\log{d_n\over 2}+{1\over d_n}{d_n\over 2} + O(n^{-1}\log n)\\
&=-\log n + {1\over 2}\log\Big({4\pi\over\beta}\Big)+{1\over 2}+ O(n^{-1}\log n),
\end{align*}
as claimed. 

For $p=\infty$ we start by observing that
\begin{align*}
{1\over d_n}\log \vol_{\beta,n}(\binfb) &= {1\over d_n}\log c_{n,\beta} + \log 2 + {1\over d_n}\log n! + S_n 
\end{align*}
with
$$
S_n := {1\over d_n}\sum_{j=0}^{n-1} \Big[2\log\Gamma\Big(1+{\beta j\over 2}\Big) + \log\Gamma\Big((j+1){\beta\over 2}\Big)-\log\Gamma\Big(2+(n+j-1){\beta\over 2}\Big)-\log\Gamma\Big({\beta\over 2}\Big)\Big].
$$
The proof of  \cite[Lemma 3.3]{KPT20} directly shows that
$$
{1\over d_n}\log c_{n,\beta} = -{1\over 2}\log n + {1\over 2}\log\Big({4\pi\over\beta}\Big)+{3\over 4}+O(n^{-1}\log n).
$$
Moreover, ${1\over d_n}\log n! = O(n^{-1}\log n)$ by \eqref{eq:LogGammaAsymptotic} and the fact that $d_n\sim{\beta n^2\over 2}$. Thus, it remains to determine the asymptotic behavior of $S_n$. First, we observe that
$$
S_n = {1\over d_n}\sum_{j=0}^{n-1} \Big[2\log\Gamma\Big(1+{\beta j\over 2}\Big) + \log\Gamma\Big((j+1){\beta\over 2}\Big)-\log\Gamma\Big(2+(n+j-1){\beta\over 2}\Big)\Big] + O(n^{-1}),
$$
since $d_n\sim{\beta n^2\over 2}$. Now, we use three times \eqref{eq:LogGammaAsymptotic} to see that
\begin{align*}
S_n={1\over d_n}\sum_{j=0}^{n-1}\Big[{3\beta\over 2}j\log\Big({\beta j\over 2}\Big)-{3\beta\over 2} j-{\beta\over 2}(n+j)\log\Big({\beta\over 2}(n+j)\Big)+{\beta\over 2}(n+j)\Big]+O(n^{-1}\log n)
\end{align*}
To evaluate these sums asymptotically, we use the following formula, which follows from Abel's summation formula as in the proof of \cite[Lemma 3.3]{KPT20}:
\begin{align}
&\sum_{j=0}^{n-1}(an+bj)\log(an+bj)\nonumber\\
&= \Big(a+{b\over 2}\Big)n^2\log n + {n^2\over 4b}\big(2(a+b)^2\log(a+b)-2a^2\log a-b(2a+b)\big)+O(n\log n)
\end{align}
as $n\to\infty$ for constants $a,b\geq 0$, where for $a=0$ or $b=0$ we use the convention $0^2\log 0=0$. Applying this with the choices $(a,b)=(0,{\beta\over 2})$ and $(a,b)=({\beta\over 2},{\beta\over 2})$ together with the fact that $\sum_{j=0}^{n-1}j={n^2\over 2}+O(n)$, we arrive at
\begin{align*}
S_n &= {1\over d_n}\Big[{3\beta\over 4}n^2\log n-{3\beta\over 8}\big(1+2\log 2-2\log\beta\big)n^2-{3\beta\over 4}n^2\\
&\qquad\qquad-{3\beta\over 4}n^2\log n-{\beta\over 8}\big(2\log 2+6\log\beta-3\big)n^2+{3\beta\over 4}n^2\Big] + O(n^{-1}\log n)\\
&=-{1\over d_n} n^2\beta\log 2 + O(n^{-1}\log n)\\
&=-2\log 2+ O(n^{-1}\log n),
\end{align*}
using additionally that $d_n\sim{\beta n^2\over 2}$.  

Summarizing, this leads to
\begin{align*}
{1\over d_n}\log \vol_{\beta,n}(\binfb) &= -{1\over 2}\log n + {1\over 2}\log\Big({4\pi\over\beta}\Big)+{3\over 4} + \log 2 -2\log 2+ O(n^{-1}\log n)\\
&= -{1\over 2}\log n + {1\over 2}\log\Big({4\pi\over\beta}\Big)+{3\over 4} -  \log 2 +  O(n^{-1}\log n),
\end{align*}
which proves the second claim.
\end{proof}

\begin{remark}
Interpreting $1/\infty$ as $0$ and using that $\log\Delta(2)=-1/4$ and $\log\Delta(\infty)=-\log 2$, the two formulas in Lemma \ref{lem:log-vol} can be combined as follows:
$$
{1\over d_n}\log\vol_{\beta,n}\bpb = -\Big({1\over 2}+{1\over p}\Big)\log n+{1\over 2}\log\Big({4\pi\over\beta}\Big)+{3\over 4}+\log\Delta(p)+  O(n^{-1}\log n),
$$
$p\in\{2,\infty\}$. It remains an open problem if this formula continues to hold for arbitrary $p\geq 1$. As we pointed out in the introduction, the method used in \cite{KPT20} does not seem to be suitable to yield a result of this quality.
\end{remark}

As a consequence, we obtain an upper bound on the speed of convergence of the ratio of volume radii of $\btb$ and $\binfb$ as required in \eqref{eq:AsymVolRadius}.

\begin{lemma} \label{cor:ratio-asymp}
	For $\beta\in\{1,2,4\}$ it holds that
\[
\frac{n^{1/2}\vol_{\beta,n}(\btb)^{1/d_n}}{\vol_{\beta,n}(\binfb)^{1/d_n}}
=\frac{\Delta(2)}{\Delta(\infty)}\big(1+O(n^{-1}\log n)\big).
\]
\end{lemma}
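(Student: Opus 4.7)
The plan is to deduce the ratio asymptotic by taking logarithms and subtracting the two expansions provided by Lemma~\ref{lem:log-vol}. Specifically, I first observe that
\[
\log\frac{n^{1/2}\vol_{\beta,n}(\btb)^{1/d_n}}{\vol_{\beta,n}(\binfb)^{1/d_n}}
= \frac{1}{2}\log n + \frac{1}{d_n}\log \vol_{\beta,n}(\btb) - \frac{1}{d_n}\log \vol_{\beta,n}(\binfb),
\]
so the $\tfrac12\log n$ prefactor is designed precisely to cancel the leading $-\tfrac12\log n$ in the difference of the two logarithmic volumes.

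Next I substitute the two asymptotic expansions from Lemma~\ref{lem:log-vol}. The $\frac12\log(4\pi/\beta)$ terms cancel, the $\log n$ terms combine with the prefactor $\frac12\log n$ to leave no $\log n$ contribution, and the constant terms reduce to $\frac{1}{2}-\frac{3}{4}+\log 2 = -\frac{1}{4}+\log 2$. Since $\log\Delta(2)=-\tfrac14$ and $\log\Delta(\infty)=-\log 2$, this constant equals $\log\Delta(2)-\log\Delta(\infty)=\log(\Delta(2)/\Delta(\infty))$. The remainder terms from the two expansions are both $O(n^{-1}\log n)$, so they combine into a single $O(n^{-1}\log n)$ error. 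Altogether
\[
\log\frac{n^{1/2}\vol_{\beta,n}(\btb)^{1/d_n}}{\vol_{\beta,n}(\binfb)^{1/d_n}}
= \log\frac{\Delta(2)}{\Delta(\infty)} + O(n^{-1}\log n).
\]

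Finally I exponentiate, using $e^{O(n^{-1}\log n)} = 1 + O(n^{-1}\log n)$ as $n\to\infty$, which yields the stated equality. There is no real obstacle here; the lemma is essentially an unpacking of Lemma~\ref{lem:log-vol}, the only delicate point being to check that the constants $-\tfrac14$ and $\log 2$ indeed match $\log\Delta(2)$ and $-\log\Delta(\infty)$, which follows from the explicit values $\Delta(2)=e^{-1/4}$ and $\Delta(\infty)=1/2$ recorded just before Lemma~\ref{lem:log-vol}.
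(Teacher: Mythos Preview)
Your proof is correct and follows essentially the same approach as the paper: take logarithms, subtract the two expansions from Lemma~\ref{lem:log-vol}, check that the constants collapse to $\log(\Delta(2)/\Delta(\infty))$, and then exponentiate. The paper organizes the bookkeeping slightly differently (it writes the ratio as $\frac{\Delta(2)}{\Delta(\infty)}(1+e_n)$ first and then deduces $e_n=O(n^{-1}\log n)$ from the log expansion), but the substance of the argument is identical.
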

\begin{proof}
Write
\[
T_n:=\frac{n^{1/2}\vol_{\beta,n}(\btb)^{1/d_n}}{\vol_{\beta,n}(\binfb)^{1/d_n}}=\frac{\Delta(2)}{\Delta(\infty)}(1+e_n),
\]
where $e_n\to 0$ by the asymptotics in \eqref{eq:vol-asymp}. Because of $\log(1+e_n)= e_n+O(e_n^2)$ as $n\to\infty$, it follows that
\[
\log T_n
=\frac{1}{2}\log n+\frac{1}{d_n}\log\Big(\frac{\vol_{\beta,n}(\btb)}{\vol_{\beta,n}(\binfb)}\Big)
=\log\Delta(2)-\log\Delta(\infty)+e_n+O(e_n^2).
\]
Now, Lemma~\ref{lem:log-vol} yields $e_n=O(n^{-1}\log n)$, which completes the proof.
\end{proof}

In order to prove Theorem \ref{thm:schatten-crit}, we will use Lemma~\ref{lem:crit}. To apply it, we need the volume asymptotics from Lemma~\ref{cor:ratio-asymp} and also a limit theorem for the norm of a random vector uniformly distributed in $\btt$. It is based on the following stochastic representation taken from \cite[Corollary 4.3]{KPT20}.

\begin{lemma}\label{pro:stoch-rep}
If $Z$ is uniformly distributed in $\bpb$ for $1\le p<\infty$, then
\[
(\lambda_{\pi(1)}(Z),\dots,\lambda_{\pi(n)}(Z))
\overset{\rm d}{=} U^{1/d_n}\frac{X}{\|X\|_p},
\]
where $U$ is uniformly distributed on $[0,1]$, $X$ is independent of $U$ and has joint density with respect to Lebesgue measure on $\IR^n$ proportional to 
\begin{equation}\label{eq:fpbetan}
f_{p,\beta,n}(x)
=e^{-\sum_{j=1}^{n}|x_i|^p}\prod_{1\le i<j\le n}|x_i-x_j|^{\beta},\qquad x\in\IR^n,
\end{equation}
and $\pi$ is a uniformly distributed random permutation on $\{1,\dots,n\}$. 
\end{lemma}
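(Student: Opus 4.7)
The plan is to establish the representation in two stages: first identify the joint distribution of the randomly permuted eigenvalue vector of $Z$, and then match its radial and angular parts to those of $U^{1/d_n}X/\|X\|_p$ via an $\ell_p$-polar disintegration.

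For the first stage, I would invoke the Weyl integration formula (see for instance \cite[Proposition 4.1.1]{AGZ10}). Writing $A \in H_n(\mathbb{F}_\beta)$ through its spectral decomposition $A = U\operatorname{diag}(\lambda_1,\ldots,\lambda_n)U^*$ produces a Jacobian proportional to $\prod_{i<j}|\lambda_i - \lambda_j|^\beta$, independent of the unitary part $U$. Restricting to $\bpb$, averaging out the unitary factor, and symmetrising via the uniform random permutation $\pi$, one obtains that $(\lambda_{\pi(1)}(Z), \ldots, \lambda_{\pi(n)}(Z))$ admits a density on $\IR^n$ proportional to
\[
\mathbf{1}_{\{\|\lambda\|_p \le 1\}}\prod_{1 \le i < j \le n}|\lambda_i - \lambda_j|^{\beta}.
\]

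For the second stage, I would use the $\ell_p$-polar coordinates $\lambda = r\theta$ with $r = \|\lambda\|_p \in [0,\infty)$ and $\theta$ on the $\ell_p$-unit sphere $S_p^{n-1}$, together with the disintegration of Lebesgue measure into a radial factor $r^{n-1}\,\dd r$ and the $\ell_p$ cone measure $\sigma$ on $S_p^{n-1}$. Since the Vandermonde is homogeneous of degree $\beta\binom{n}{2}$, in these coordinates the density above is proportional to $r^{d_n-1}\mathbf{1}_{[0,1]}(r)$ in $r$ times $\prod_{i<j}|\theta_i - \theta_j|^{\beta}$ in $\theta$, where I am using $d_n = n + \beta\binom{n}{2}$ from \eqref{eq:dn}. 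Consequently $\|\lambda\|_p$ is distributed as $U^{1/d_n}$ with $U \sim \mathrm{Unif}[0,1]$, is independent of $\lambda/\|\lambda\|_p$, and $\lambda/\|\lambda\|_p$ has density proportional to $\prod_{i<j}|\theta_i - \theta_j|^\beta$ against $\sigma$. Applying the same polar decomposition to $X$, whose density is proportional to $f_{p,\beta,n}$, the extra factor $e^{-r^p}$ is purely radial and therefore only changes the law of $\|X\|_p$, leaving $X/\|X\|_p$ with exactly the same angular distribution. Matching independent radial and angular parts then yields the asserted equality in distribution.

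The main bookkeeping to watch out for is the $\ell_p$-polar disintegration itself: one has to verify that the cone measure on $S_p^{n-1}$ is the same in both decompositions, and that the radial Jacobian $r^{n-1}$ combines with the Vandermonde weight $r^{\beta\binom{n}{2}}$ to produce precisely the exponent $d_n - 1$ that reflects the fact that $\bpb$ is a $d_n$-dimensional body. Once this is in place, the rest reduces to Fubini and the standard radial-angular independence for product densities.
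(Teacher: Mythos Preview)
Your argument is correct and is exactly the standard route: Weyl integration to obtain the eigenvalue density $\mathbf{1}_{\{\|\lambda\|_p\le 1\}}\prod_{i<j}|\lambda_i-\lambda_j|^\beta$, followed by an $\ell_p$-polar disintegration in which the homogeneity degree $\beta\binom{n}{2}$ of the Vandermonde combines with the radial Jacobian $r^{n-1}$ to give the exponent $d_n-1$. Note, however, that the paper does not supply its own proof of this lemma at all; it simply quotes the statement from \cite[Corollary 4.3]{KPT20}. So there is no competing argument in the paper to compare against --- what you have written is essentially the proof that sits behind that citation.
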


Using this stochastic representation and known limit laws for spectral statistics of the Gaussian Unitary Ensemble (GUE), we can prove the following limit law.

\begin{lemma}\label{pro:limit-norm}
Let $Z$ be uniformly distributed on $\btt$. For every $x\in\IR$, it holds that
\[
\lim_{n\to\infty}\IP[\sqrt{2}n^{2/3}(n^{1/2}\|Z\|_{\infty}-2)\le x]
= F_{2}(2^{-1/2}x)^2.
\]
\end{lemma}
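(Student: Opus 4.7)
The plan is to apply the stochastic representation of Lemma \ref{pro:stoch-rep} with $p=\beta=2$. Since $\|\cdot\|_\infty$ is invariant under permutations of coordinates, this yields the distributional identity
\[
\|Z\|_\infty \stackrel{\mathrm{d}}{=} U^{1/d_n}\,\frac{\max_{i=1,\ldots,n}|X_i|}{\|X\|_2},
\]
where $U$ is uniform on $[0,1]$, the vector $X=(X_1,\ldots,X_n)$ has density proportional to $f_{2,2,n}(x)=e^{-\sum x_i^2}\prod_{i<j}(x_i-x_j)^2$, and $U$, $X$ are independent. This density coincides, up to normalization, with the joint (unordered) eigenvalue density of a Gaussian unitary ensemble matrix in the scaling used in \eqref{eq:TracyWidom}. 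We therefore need to control three factors on the Tracy-Widom scale $n^{-2/3}$: the scalar $U^{1/d_n}$, the Euclidean norm $\|X\|_2$, and the maximum $\max_i|X_i|$.

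The first two factors can be handled at leading order. From $d_n\sim n^2$ we immediately obtain $U^{1/d_n}=1+O_p(n^{-2})$. For the denominator, $\|X\|_2^2$ equals in distribution $\mathrm{tr}(H^2)=\sum_{i,j}|H_{ij}|^2$ for a GUE matrix $H$ in the matching convention, and a direct moment computation on the independent matrix entries yields $\IE[\|X\|_2^2]=n^2/2$ together with $\mathrm{Var}(\|X\|_2^2)=O(n^2)$. Chebyshev's inequality then gives $\|X\|_2=(n/\sqrt{2})(1+O_p(1/n))$, whose relative error is again negligible on the Tracy-Widom scale.

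The heart of the argument is the asymptotic behavior of $\max_i|X_i|=\max(\max_i X_i,\,-\min_i X_i)$. The density $f_{2,2,n}$ is invariant under $x\mapsto -x$, so the Tracy-Widom limit \eqref{eq:TracyWidom} with $\beta=2$ applies identically to both $\max_i X_i$ and $-\min_i X_i$; denote the correspondingly rescaled random variables by $A_n$ and $B_n$, each converging in distribution to a Tracy-Widom$(2)$ random variable. The crucial extra input, valid precisely for $\beta=2$ thanks to \cite[Corollary~1]{BDN10}, is that $\max_i X_i$ and $\min_i X_i$ are asymptotically independent. Consequently $(A_n,B_n)$ converges jointly to a pair of independent Tracy-Widom$(2)$ random variables, and the continuous mapping theorem applied to $(u,v)\mapsto\max(u,v)$ gives
\[
\sqrt{2}n^{2/3}\bigl(n^{-1/2}\max_i|X_i|-\sqrt{2}\bigr)\longrightarrow W
\]
in distribution, where $W$ has distribution function $F_2(\cdot)^2$.

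Assembling the three pieces via Slutsky's theorem (or equivalently Lemma \ref{lem:domination}) will yield
\[
\sqrt{2}n^{2/3}\bigl(n^{1/2}\|Z\|_\infty-2\bigr) = \sqrt{2}\cdot\sqrt{2}n^{2/3}\bigl(n^{-1/2}\max_i|X_i|-\sqrt{2}\bigr)+o_p(1) \longrightarrow \sqrt{2}\,W,
\]
whose distribution function at $x$ equals $F_2(x/\sqrt{2})^2=F_2(2^{-1/2}x)^2$, exactly the claim. The main obstacle is the joint convergence of $(A_n,B_n)$: without the asymptotic independence of the extreme eigenvalues, one could only conclude with an unknown dependence structure in the limit, and it is precisely the availability of this independence only for $\beta=2$ that restricts the lemma, and therefore Theorem \ref{thm:schatten-crit}, to this case.
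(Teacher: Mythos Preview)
Your argument is correct and follows the same overall architecture as the paper: apply the stochastic representation, show the $U^{1/d_n}$ factor and the denominator contribute errors that are $o_p(n^{-2/3})$, and handle $\max_i|X_i|$ via the asymptotic independence of the two extreme eigenvalues from \cite{BDN10}. The only substantive difference lies in the treatment of the denominator $\|X\|_2$. The paper proves a genuine central limit theorem for $\big(\frac{1}{n}\sum_i|(2n)^{-1/2}X_i|^2\big)^{1/2}$ on the scale $n^{-1}$ by invoking the fluctuation theory for linear spectral statistics (Johansson \cite{Joh98}, or more generally \cite{LLW19}), and then feeds this into Lemma~\ref{lem:domination}. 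You instead exploit the identity $\sum_i X_i^2=\mathrm{tr}(H^2)=\sum_{i,j}|H_{ij}|^2$ for the underlying GUE matrix $H$ and compute the mean and variance directly from the independent entries, obtaining $\|X\|_2=(n/\sqrt{2})(1+O_p(n^{-1}))$ by Chebyshev. This is more elementary and entirely sufficient here, since only the $O_p(n^{-1})$ relative error---not the Gaussian limit---is needed to be negligible against $n^{-2/3}$. The paper's route has the advantage of working uniformly for all $\beta\in\{1,2,4\}$ without reference to a matrix model, whereas your shortcut is specific to $p=2$ (where the Schatten norm is the Frobenius norm) and to the availability of an explicit matrix realization with independent entries; but since the lemma is already restricted to $p=\beta=2$, this costs nothing.
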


Before proving Lemma~\ref{pro:limit-norm}, let us give a proof of Theorem~\ref{thm:schatten-crit}.

\begin{proof}[Proof of Theorem~\ref{thm:schatten-crit}]
We start by writing
\[
\frac{n\vol_{\beta,n}(\btb)^{1/d_n}}{n^{1/2}\vol_{\beta,n}(\binfb)^{1/d_n}}
=\frac{\Delta(2)}{\Delta(\infty)}(1+e_n).
\]
Together with Lemma~\ref{pro:limit-norm} this implies that $\tc_{2,\infty,2}=2\frac{\Delta(\infty)}{\Delta(2)}=e^{1/4}$, and with Lemma~\ref{lem:non-crit} we obtain the non-critical case. By Lemma~\ref{cor:ratio-asymp} we have $e_n=O(n^{-1}\log n)$ and consequently $e_n n^{2/3}\to 0$ as $n\to\infty$. We can now deduce the critical case from Lemma~\ref{lem:crit} together with Lemma~\ref{pro:limit-norm}.
\end{proof}

In the remainder of this section we give the proof of Lemma~\ref{pro:limit-norm}. Recall that by Lemma~\ref{pro:stoch-rep} we have for $Z$ uniformly distributed in $\btt$ that
\begin{equation}\label{eq:170823A}
n^{1/2}\|Z\|_{\infty}
=n^{1/2}\max|\lambda_i(Z)|
=U^{1/d_n}\frac{n^{-1/2}\max|X_i|}{\Big(\frac{1}{n}\sum_{i=1}^{n}|n^{-1/2}X_i|^2\Big)^{1/2}},
\end{equation}
where $X$ is distributed with density proportional to $f_{2,2,n}$, recall \eqref{eq:fpbetan}. We will first derive limit theorems for numerator and denominator of \eqref{eq:170823A} separately and then combine them using Lemma~\ref{lem:domination}.

\begin{lemma} \label{lem:abs-tracy}
If $X$ is distributed with density proportional to $f_{2,2,n}$, then 
\[
\lim_{n\to\infty}\IP[\sqrt{2}n^{2/3}(n^{-1/2}\max|X_i|-\sqrt{2})\le x]= F_{2}(x)^2, \qquad x\in\IR.
\]
\end{lemma}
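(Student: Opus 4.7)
The approach is to reduce $\max_i |X_i|$ to a function of the extreme order statistics $\max_i X_i$ and $\min_i X_i$, and then to combine the Tracy-Widom limit \eqref{eq:TracyWidom} with the asymptotic independence of the largest and smallest eigenvalue for the GUE ($\beta=2$) established in \cite[Corollary 1]{BDN10}.

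The starting observation is the elementary identity $\max_i |X_i| = \max(\max_i X_i,\, -\min_i X_i)$. Using the centred and rescaled quantities $X_{2,n}^{\max}$ and $X_{2,n}^{\min}$ introduced after Conjecture~\ref{conjecture}, this rewrites the event in question as
\[
\{\sqrt{2}n^{2/3}(n^{-1/2}\max|X_i|-\sqrt{2})\le x\} = \{X_{2,n}^{\max}\le x\}\cap\{-X_{2,n}^{\min}\le x\}.
\]
Since $f_{2,2,n}$ in \eqref{eq:fpbetan} is invariant under $x\mapsto -x$, the vector $(-X_i)_i$ has the same distribution as $(X_i)_i$; in particular $-X_{2,n}^{\min}\stackrel{d}{=}X_{2,n}^{\max}$, so both marginals converge in distribution to a Tracy-Widom($2$) random variable by \eqref{eq:TracyWidom}. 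Here I use that the scaling built into $f_{2,2,n}$ matches the convention $n^{-1/2}\max X_i\to\sqrt{2}$ underlying \eqref{eq:TracyWidom}.

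To upgrade marginal convergence to joint convergence, I would invoke \cite[Corollary 1]{BDN10}, which establishes Conjecture~\ref{conjecture} in the $\beta=2$ case, that is, the asymptotic independence of $X_{2,n}^{\max}$ and $X_{2,n}^{\min}$. Combined with the reflection identity above, this gives joint convergence of $(X_{2,n}^{\max},-X_{2,n}^{\min})$ to two independent copies of a Tracy-Widom($2$) random variable, and continuity of $F_2$ then yields the claimed limit $F_2(x)^2$. The only substantive ingredient here is the external input from \cite{BDN10}; this is precisely the reason Theorem~\ref{thm:schatten-crit} is restricted to $\beta=2$, and once it is granted the rest is just an observation about $|\,\cdot\,|$ together with the symmetry of $f_{2,2,n}$.
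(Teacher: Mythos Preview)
Your proof is correct and follows essentially the same approach as the paper's: decompose $\max_i|X_i|$ via $\max_i X_i$ and $-\min_i X_i$, use the symmetry of $f_{2,2,n}$ to identify the marginal limits with the Tracy-Widom($2$) law from \eqref{eq:TracyWidom}, and then invoke the asymptotic independence of the extremal eigenvalues from \cite[Corollary 1]{BDN10} to pass to the product limit $F_2(x)^2$. The paper writes the joint event as $\{X_{\beta,n}^{\max}\le x,\,X_{\beta,n}^{\min}\ge -x\}$ and subtracts, whereas you phrase it as joint convergence of $(X_{2,n}^{\max},-X_{2,n}^{\min})$ to an independent pair, but these are the same computation.
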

\begin{proof}
We put 
\[
X_{\beta,n}^{\min}:=\sqrt{2}n^{2/3}(n^{-1/2}\min X_i +\sqrt{2})
\quad\text{and}\quad
X_{\beta,n}^{\max}:=\sqrt{2}n^{2/3}(n^{-1/2}\max X_i -\sqrt{2}).
\]
The fluctuations for the largest eigenvalue were stated in the introduction and, by symmetry of the density function, the random variables $\max X_i$ and $-\min X_i$ are identically distributed. Thus, we obtain that, as $n\to\infty$,
\[
\IP[X_{\beta,n}^{\max}\le x]\to F_{\beta}(x)
\qquad\text{and}\qquad
\IP[X_{\beta,n}^{\min}\le x]\to 1-F_{\beta}(-x), \qquad x\in\IR.
\]
By the asymptotic independence stated in \cite[Corollary 1]{BDN10} and mentioned above in Conjecture~\ref{conjecture} we deduce that, for every $x\in\IR$,
\begin{align*}
	\IP[\sqrt{2}n^{2/3}(n^{-1/2}\max |X_i|-\sqrt{2})\le x]
	&=\IP[X_{\beta,n}^{\max}\le x,X_{\beta,n}^{\min}\ge -x]\\
	&=\IP[X_{\beta,n}^{\max}\le x]-\IP[X_{\beta,n}^{\max}\le x,X_{\beta,n}^{\min}\le -x]\\
	&\to F_2(x)-F_2(x)(1-F_2(x))\\
	&=F_2(x)^2,
\end{align*}
as $n\to\infty$.
\end{proof}

For the numerator in \eqref{eq:170823A} a central limit theorem holds, which we directly derive from the well-known fluctuations of linear spectral statistics. The relevant case $\beta=2$ was already proved by Johansson in \cite{Joh98} but we will use the more general result from \cite{LLW19}.

\begin{lemma} \label{lem:numerator}
For $\beta\in\{1,2,4\}$ it holds that
\[
n\Big(\Big(\frac{1}{n}\sum_{i=1}^{n}|(\beta n)^{-1/2}X_i|^2\Big)^{1/2}-\frac{1}{2}\Big)\to N
\]
in distribution as $n\to\infty$, where $N$ is a Gaussian random variable with mean $\mu=(\frac{1}{2}-\frac{1}{\beta})\frac{3}{4}$ and some non-degenerate variance $\sigma^2>0$.
\end{lemma}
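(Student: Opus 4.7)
The plan is to recognise the $X_i$'s as rescaled eigenvalues of a standard Gaussian $\beta$-ensemble and then invoke the central limit theorem for linear spectral statistics from \cite{LLW19}. Setting $Y_i:=(2/\sqrt{\beta})\,X_i$, the joint density of $(Y_1,\ldots,Y_n)$ becomes proportional to $\exp(-\tfrac{\beta}{4}\sum_{i=1}^n y_i^2)\prod_{i<j}|y_i-y_j|^{\beta}$, i.e.\ the standard $\beta$-Hermite ensemble, and a direct computation yields
\[
\frac{1}{n}\sum_{i=1}^n \bigl|(\beta n)^{-1/2}X_i\bigr|^2 \;=\; \frac{1}{4}\cdot \frac{1}{n}\sum_{i=1}^n \Big(\frac{Y_i}{\sqrt{n}}\Big)^2.
\]
Thus the quantity of interest is a quarter of the second empirical moment of the rescaled eigenvalues $Y_i/\sqrt{n}$, whose empirical distribution converges to the semicircle law $\mu_{\mathrm{sc}}$ on $[-2,2]$ with $\int x^2\,\dd\mu_{\mathrm{sc}}(x)=1$. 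This already identifies the centring $1/2$ after taking a square root.

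For the fluctuations I would apply the LSS CLT of \cite{LLW19} to the test function $f(x)=x^2$, which is polynomial and hence safely inside the regularity class covered there. This produces a real number $m_\beta$ and some $v_\beta>0$ with
\[
\sum_{i=1}^n f(Y_i/\sqrt{n}) - n\int f\,\dd\mu_{\mathrm{sc}} \;=\; \frac{1}{n}\sum_{i=1}^n Y_i^2 - n \;\longrightarrow\; N(m_\beta,v_\beta)
\]
in distribution, so that dividing by $4n$ yields $4n\bigl(\tfrac{1}{n}\sum_{i=1}^n |(\beta n)^{-1/2}X_i|^2-\tfrac14\bigr)\to N(m_\beta,v_\beta)$. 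Applying the delta method to $g(s)=\sqrt{s}$ at $s=1/4$, where $g'(1/4)=1$, together with Slutsky's theorem then gives
\[
n\Big(\Big(\tfrac{1}{n}\sum_{i=1}^n \bigl|(\beta n)^{-1/2}X_i\bigr|^2\Big)^{1/2}-\tfrac{1}{2}\Big) \;\longrightarrow\; N\Big(\tfrac{m_\beta}{4},\tfrac{v_\beta}{16}\Big),
\]
so that $\mu=m_\beta/4$ and $\sigma^2=v_\beta/16>0$.

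It remains to evaluate $m_\beta$ and $v_\beta$ from the formulas in \cite{LLW19} at $f(x)=x^2$, using the Chebyshev expansion $f(2\cos\theta)=2+2\cos(2\theta)$ together with the endpoint values $f(\pm2)=4$ and $\int f\,\dd\mu_{\mathrm{sc}}=1$. Non-degeneracy $v_\beta>0$ follows because the Chebyshev coefficient $c_2=2$ is non-zero, which forces a strictly positive variance in any standard form of the LSS CLT. The main obstacle is bookkeeping: different sources normalise the $\beta$-Hermite ensemble and centre the linear statistic slightly differently, so obtaining the precise value $\mu=(\tfrac12-\tfrac1\beta)\tfrac34$ requires careful tracking of constants across conventions. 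A useful sanity check is that this formula vanishes at $\beta=2$, in agreement with Johansson's CLT \cite{Joh98} for the GUE, where no mean correction is present; the remaining cases $\beta\in\{1,4\}$ are then pinned down by the $1/\beta$-linear correction contributed by the endpoints.
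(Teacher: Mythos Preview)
Your proposal is correct and follows essentially the same route as the paper: an LSS central limit theorem from \cite{LLW19} applied to the square function, followed by the delta method with $g(s)=\sqrt{s}$ at $s=1/4$ where $g'(1/4)=1$. The only cosmetic difference is the normalisation convention: the paper applies \cite{LLW19} directly to the variables $(\beta n)^{-1/2}X_i$ with potential $V(x)=x^2$ and limiting Ullmann measure $\mathcal{U}(2)$ (whose second moment is $1/4$), whereas you rescale to the standard $\beta$-Hermite ensemble and work with the semicircle law on $[-2,2]$; these are equivalent after your substitution $Y_i=(2/\sqrt{\beta})X_i$. Your additional remarks on the Chebyshev computation and the $\beta=2$ sanity check against \cite{Joh98} go slightly beyond the paper's proof, which simply quotes the mean value from \cite{LLW19} without further elaboration.
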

\begin{proof}
	The random variables $(\beta n)^{-1/2}X_1,\ldots,(\beta n)^{-1/2}X_n$ are distributed according to the density $P_{V,\beta}^n$ as denoted in \cite{LLW19} where the $V\colon x\mapsto x^2$. The limiting measure is given by the Ullmann distribution $\mathcal{U}(2)$ and if $W\sim \mathcal{U}(2)$, then $\IE |W|^2=1/4$, see \cite[IV,Theorem 5.1]{ST97}. Therefore, by \cite[Theorem 1.2]{LLW19} we get
\[
n\Big(\frac{1}{n}\sum_{i=1}^{n}|(\beta n)^{-1/2}X_i|^2-\frac{1}{4}\Big)\to N
\]
in distribution as $n\to\infty$. We now apply the delta method with the function $g(x)=x^{1/2}$ at the point $x_0=\frac{1}{4}$ giving $g'(x_0)=1$ to deduce the result.
\end{proof}

\begin{proof}[Proof of Lemma~\ref{pro:limit-norm}]
Applying the stochastic representation of Lemma \ref{pro:stoch-rep} for $\beta=2$ we have
\[
n^{1/2}\|Z\|_{\infty}
=U^{1/d_n}\frac{n^{-1/2}\max|X_i|}{\sqrt{2}\Big(\frac{1}{n}\sum_{i=1}^{n}|(2 n)^{-1/2}X_i|^2\Big)^{1/2}}
\]
which is of the form $U^{1/d_n}\frac{A_n}{B_n}$. We deduce from Lemma \ref{lem:abs-tracy} and Lemma \ref{lem:numerator} that
\[
\sqrt{2}n^{2/3}(A_n-\sqrt{2})\to A
\qquad\text{and}\qquad
n(B_n-2^{-1/2})\to B,
\]
as $n\to\infty$ in distribution, where $A$ has distribution function $F_2^2$ and $B$ is a Gaussian random variable with mean $\sqrt{2}\mu$ and variance $2\sigma^2$.

As a consequence of Lemma~\ref{lem:domination}, applied with $a_n=2^{-1/2}n^{-2/3}, b_n=n^{-1},a=\sqrt{2},b=2^{-1/2}$ and in combination with Lemma \ref{lem:abs-tracy} and Lemma \ref{lem:numerator}, we conclude that for all $x\in\IR$,
\[
\lim_{n\to\infty}\IP\Big[\sqrt{2}n^{2/3}\Big(\frac{A_n}{B_n}-2\Big)\le x\Big]
= \IP[A\le 2^{-1/2}x]
=F_{2}(2^{-1/2}x)^2.
\]
The proof can now be concluded by an application of Slutsky's theorem, since $U^{1/d_n}$ converges in probability to $1$, as $n\to\infty$.
\end{proof}

\subsection*{Acknowledgement}
The authors thank Djalil Chafa\"i for providing references to asymptotic independence of the fluctuations of the extremal eigenvalues of random matrices.\\ MS has been supported by the Austrian Science Fund (FWF) Project P32405 \textit{Asymptotic geometric analysis and applications}. CT has been supported by the DFG via SFB/TR 191 and the project \textit{Limit theorems for the volume of random projections of $\ell_p$-balls} (project number 516672205). 

\bibliographystyle{plain}
\bibliography{critinter}

\end{document}